\newcommand\ZZ{\mathbb{Z}}
\newcommand{\Z}{\mathbb{Z}}
\newcommand\NN{\mathbb{N}}
\newcommand\RR{\mathbb{R}}
\newcommand\R{\mathbb{R}}
\newcommand\QQ{\mathbb{Q}}
\newcommand\Qbar{{\overline{\QQ}}}
\newcommand\FF{\mathbb{F}}
\newcommand\PP{\mathbb{P}} 
\newcommand{\A}{\mathbb{A}} 
\newcommand\GG{\mathbb{G}} 
\newcommand{\OO}{\mathcal{O}} 
\newcommand{\fdiscn}{f^{\Delta,n}} 
\newcommand{\ol}{\overline} 
\newcommand{\countingESZB}{\mathcal{N}_{\mathcal{U}, \mathcal{V}} (B) } 
\newcommand{\JetsX}{\mathcal{J}_0\mathcal{X}} 
\newcommand{\Jets}{\mathcal{J}_0} 
\newcommand{\locdiscrep}{\delta_{\mathcal{V}; v}(x)}
\DeclareMathOperator{\spec}{Spec} 
\DeclareMathOperator{\aut}{Aut} 
\DeclareMathOperator{\Cl}{Cl} 
\DeclareMathOperator{\gal}{Gal} 
\DeclareMathOperator{\GL}{GL} 
\DeclareMathOperator{\Hom}{Hom} 
\DeclareMathOperator{\height}{ht} 
\DeclareMathOperator{\Height}{Ht} 
\DeclareMathOperator{\Stab}{Stab} 
\DeclareMathOperator{\coker}{\mathrm{coker}}
\DeclareMathOperator{\edd}{edd} 
\DeclareMathOperator{\Nm}{Nm} 
\DeclareMathOperator{\im}{Im} 
\newcommand{\Gal}[1]{\Gamma_{#1}} 
\newtheorem{theorem}{Theorem}
\newtheorem{proposition}[theorem]{Proposition}
\newtheorem{lemma}[theorem]{Lemma}
\theoremstyle{definition}
\newtheorem{definition}[theorem]{Definition}
\newtheorem{notation}[theorem]{Notation}
\newtheorem{conjecture}[theorem]{Conjecture}
\newtheorem{example}[theorem]{Example}
\theoremstyle{remark}
\newtheorem{remark}[theorem]{Remark}
\numberwithin{theorem}{section}
\numberwithin{equation}{section}
\title{On rational points on classifying stacks and Malle's conjecture}
\author{Shabnam Akhtari}
\address{Pennsylvania State University, Department of Mathematics,  University Park, PA 16802 USA}
\email{akhtari@psu.edu}
\author{Jennifer Park} 
\address{The Ohio State University, Department of Mathematics, Columbus, OH 43210, USA}
\email{park.2720@osu.edu}
\author{Marta Pieropan} 
\address{Utrecht University, Mathematical Institute, Budapestlaan 6, 3584 CD Utrecht, The Netherlands}
\email{m.pieropan@uu.nl}
\author{Soumya Sankar}
\address{Utrecht University, Mathematical Institute, Budapestlaan 6, 3584 CD Utrecht, The Netherlands}
\email{s.sankar@uu.nl}
\date{\today}
\begin{document}

\maketitle

\begin{abstract}
    In this expository article, we compare Malle's conjecture on counting number fields of bounded discriminant with recent conjectures of Ellenberg--Satriano--Zureick-Brown and Darda--Yasuda on counting points of bounded height on classifying stacks. We illustrate the comparisons via the classifying stacks \(B(\ZZ/n\ZZ)\) and \(B\mu_n\).  
\end{abstract}

\section{Introduction}
\sloppy
In 2004, Malle \cite{malle1,malle2} put forward a conjecture for the asymptotic growth of the count of number field extensions of bounded discriminant and fixed Galois group. Field extensions of fixed Galois group correspond to certain points on suitable classifying stacks. Recently, Ellenberg, Satriano and Zureick-Brown \cite{ESZB} proposed a framework for heights on stacks and a conjecture for the order of growth of the number of rational points of bounded height on stacks that is compatible with Manin's conjecture \cite{FMT89, Batyrev-Manin} for varieties. They interpret Malle's conjecture as a special case of their conjecture in the case of classifying stacks of finite constant group schemes. Later, Darda and Yasuda \cite{darda-yasuda-torsors1}, \cite{darda-yasuda22})
proposed a different framework for heights and an asymptotic formula that extends Malle's conjecture to classifying stacks of not necessarily constant finite group schemes. 

The goal of this paper is to clarify the relation between these three conjectures over number fields focusing on the special cases of $B(\ZZ/n\ZZ)$ and $B\mu_n$. While several of the results and conjectures mentioned may be generalized to global function fields, we will not consider that case in this article. A working knowledge of algebraic number theory and familiarity with the basic language of schemes is required, but the reader is not expected to have any background on stacks.

The structure of the paper is as follows. In \S \ref{sec:BG}, we start by recalling the theory of classifying stacks of finite group schemes. We then describe in detail the sets of rational points on  $B\mu_n$ in \S \ref{subsec:mu_n} and on $B(\ZZ/n\ZZ)$ in \S \ref{subsec:Z/nZ}. In \S \ref{sec:heights}, we summarize the literature of heights on classifying stacks of finite groups, with particular focus on the \cite{ESZB} framework. Finally, in \S \ref{sec:malles-conjecture}, we overview
the literature on Malle's conjecture and highlight the connections to the two conjectures for stacks and to various results in the same spirit.
Along the whole paper, the 
concepts that are introduced, though often more general, are illustrated using the examples of $B(\ZZ/n\ZZ)$ and $B\mu_n$.

\subsection{Notation}
\label{sec:notation}

Throughout the paper, we will use \(K\) to denote a number field and \(\OO_K\) to denote the ring of integers of \(K\).  We will use \(M_K\) to denote the set of places of \(K\). For a finite \(v \in M_K\), \(q_v\) will denote the size of the residue field at \(v\). If \(v \in M_K\) is infinite,  \(q_v\) will denote the Euler number $e$. We will use \(S\) to denote a scheme, and in most cases, we will only be interested in \(S\) of the form \(\spec \ZZ\), \(\spec \OO_K\), or \(\spec K\).

A  {group scheme} over \(S\) is a group object in the category of schemes. Throughout this article, we will assume that our group schemes are finite and smooth. 
In particular, they are of the form $\spec A$ for some ring \(A\).
If a group scheme \(G_S\) (or just \(G\) when the base scheme is clear from context) corresponds to a constant sheaf of abelian groups, it is called a constant group scheme. A finite constant group scheme thus comes from some finite group \(G\) and we will denote it by \(\underline{G}\). 

Throughout this article, we will fix a positive integer \(n\). Let \(\ZZ/n\ZZ\) denote the additive cyclic group of order \(n\), and \(\mu_n\) the multiplicative group scheme of order $n$. Their group scheme structures are explained in \S\ref{sec:BG}. For every positive integer \(m\), we will use \(\zeta_m\) to denote a primitive \(m\)-th root of unity.

For a number field \(K\) we let \(\Gal{K} = \gal(\ol{K}/K)\) be the Galois group of a fixed algebraic closure $\ol{K}$ of $K$. For a finite group \(G\), an \'etale \(G\)-algebra \(L/K\) is an \'etale algebra  of degree \(\#G\) where \(G \hookrightarrow \mathrm{Aut}_K(L)\) and \(G\) acts transitively on the idempotents of \(L\) (this is the notion of a \(G\) structured algebra as in \cite{wood-probabilities}). When \(L\) is a field, it is Galois with Galois group \(G\).
For an extension of \'etale algebras \(L/K/\QQ\), \(\Delta(L/K)\) will denote the relative discriminant, while \(|\Delta(L/K)|\) will denote the absolute discriminant, i.e., the absolute value of the norm of \(\Delta(L/K)\) from \(K\) to \(\QQ\) of \(\Delta(L/K)\).

If \(L = K(\alpha)\) is an extension of degree \(n\), then its Galois group \(G\) acts transitively on the \(n\) conjugates of \(\alpha\) and hence gives a map \(G \hookrightarrow S_n\). In particular, if \(L\) is Galois over $K$, we get a map \(G \to S_{\#G}\). The regular representation of \(G\) is the map \(G \to S_{\#G}\) induced by the action of \(G\) acting on itself by left multiplication.

Given a group scheme \(G\) over \(S\), we will write \(BG_S\) for the classifying stack of \(G\) over \(S\), defined in \S \ref{subsec:BG}. If \(S\) is clear from context, we will suppress the subscript. Our assumptions on \(G\) imply that \(BG_S\) is an algebraic stack, and in the case when \(G\) is \'etale over \(S\), also Deligne--Mumford. See \cite[E.g. 8.1.12, Defn 8.3.1]{Olsson-Book}.
We will not introduce any technical definitions of stacks unless absolutely necessary, but we refer to the reader to \cite{fantechi-stacks}, \cite{voight-stacks}, \cite{Olsson-Book} and \cite{alpernotes} for further reading.

Given a scheme \(X\) or a stack \(\mathcal{X}\) over a field \(K\) and a height function \(\Height\) on \(\mathcal{X}(\ol{K})\), we say the function \(\Height\) has the Northcott property if for any real \(B>0\) and \'etale algebra \(L/K\), 
\[
\#\{x \in \mathcal{X}(L) \mid \Height(x) < B\} < \infty.
\]
In general, if \(\Height\) denotes some height function, we will use lowercase \(\height\) to denote the logarithmic height \(\log \Height\), and vice versa.

Given two functions \(f, g : \,\RR \to \RR\), we say that \(f \ll g\) if there is a constant \(c\) such that for \(x\) sufficiently large, \(f(x) \le cg(x)\). We say that \(f \sim g \) if \[
\lim_{x \to \infty } \frac{f(x)}{g(x)} = 1.
\]

\subsection*{Acknowledgements}
The authors would like to thank Brandon Alberts and Remy van Dobben de Bruyn for useful conversations about parts of the paper. Thanks go also to Aaron Landesman for useful comments, and to the anonymous referee for suggestions that improved the exposition and for coming up with the example that is now in Remark 2.18.
The writing of this article was initiated at the Women in Numbers 6 workshop at the Banff International Research Station in Banff, Alberta, Canada. The authors are grateful for having the opportunity to participate in this workshop.
SA was partially supported by the NSF award DMS-2327098. JP was partially supported by the NSF award DMS-2152182.
MP was partially supported by the Dutch Research Council (NWO) grants
VI.Vidi.213.019 and OCENW.XL21.XL21.011. SS was partially supported by the Dutch Research Council (NWO) grant OCENW.XL21.XL21.011. 
For the purpose of open access, a CC BY public copyright license is applied to any Author Accepted Manuscript version arising from this submission.

\section{Rational points on $BG$}
\label{sec:BG}

In this section, $G$ is a finite
group scheme over $\spec \ZZ$. 
In particular, it is affine. 
In many cases in this article, $G$ will be induced from a finite group (also denoted $G$ via abuse of notation) as the constant sheaf $\underline G$, which is representable over $\spec\ZZ$ by the scheme 
$$
G_{\ZZ}=\bigsqcup_{g\in G}\spec\ZZ,
$$
see \cite[\href{https://stacks.math.columbia.edu/tag/047F}{Tag 047F}]{stacks-project} and \cite[Above Corollary II.1.7]{MR559531}. Notice that $G_\ZZ$ is a finite disjoint union of $\#G$ $\ZZ$-points in $\A^1_\ZZ$. It is a smooth affine scheme. For every ring $A$ we write $G_A:=G_\ZZ\times_{\spec\ZZ}\spec A$.

We will define the classifying stack $BG$, which parametrizes \(G\)-torsors. We begin by recalling the definition of $G$-torsors in \S \ref{subsec:torsors}, the categorical definition of $BG$ in \S \ref{subsec:BG}, and we give explicit descriptions of points on this stack in the cases where \(G\) is \(\mu_n\) and \(\ZZ/n\ZZ\) in \S\S \ref{subsec:mu_n}-\ref{subsec:Z/nZ}.

In this section, we will state everything for the base scheme \(\spec \ZZ\). Several definitions work for a more general base scheme, but this makes our exposition easier.

\subsection{\(G\)-Torsors}
\label{subsec:torsors}

\begin{definition}
    Let $T$ be a scheme (over $\spec \ZZ$). A  {$G$-torsor over $T$} (which, in this case of affine group schemes is the same as a principal $G$-bundle) is a map of schemes $\pi \colon P \to T$ with a left $G$-action on $P$ such that on some (\'etale or fppf) cover $\{ U_i \to T\}$, $P \times_T U_i$ is the trivial torsor $G|_{U_i} \times U_i$. Equivalently, we can require that the map $G \times_T P \to P \times_T P$ given by $(g,p) \mapsto (p, gp)$ is an isomorphism by \cite[Proposition 4.5.6]{Olsson-Book}.
\end{definition}

{We refer the reader to \cite[\S 4.5]{Olsson-Book}, \cite[\S III.4]{MR559531}, and \cite{skorob-torsors} for further reading on torsors and their properties.}

\begin{remark}
If $T = \spec K$ for a field $K$, and $G = \spec A$ is finite, then a $G$-torsor over $T$ is a morphism $P \to \spec K$ with an action of $G$ on $P$ and such that under suitable field extensions $L_i/K$, $P_{L_i}\cong G_{L_i}$ compatible with the $G_{L_i}$-action by 
{left} multiplication on $G_{L_i}$.

\end{remark}

We now look at two special cases.

 \begin{example}[$G$ constant]
 \label{example:BG constant}
    
    In the case where the group scheme is a constant scheme given by a finite group, the set of $G$-torsors over $T = \spec K$ is in bijection with the set of \'etale \(G\)-algebras over $K$ (in the sense of \S \ref{sec:notation}). Furthermore, the set of  {connected} $G$-torsors over $T = \spec K$ is in bijection with the set of  {Galois} extensions of $K$ with Galois group $G$. 
    
    Indeed, let \(P = \spec L\) be a \(G\)-torsor over \(K\). Since the map $G\times P\to P\times P$ is an isomorphism, we have $L\otimes_KL\cong L^{\#G}$ as $L$-vector spaces, and hence, $[L:K]=\#G$. Since the map $G\times P\to P\times P$ is an isomorphism, $G$ acts on $P$ by automorphisms of $P$ as a $K$-scheme, or equivalently of $L$ as a $K$-algebra. Hence, there is a group homomorphism $G\to\aut_K(L)$. Let $H$ be its image. Then trivializing $P$ over $L$ and taking pullback induce a sequence of group homomorphisms
    $$
    G\to H\to \aut_{G\text{-torsors}/L}(P_L)\cong \aut_{G\text{-torsors}/L}(G_L)=G
    $$
    whose composition is the identity on $G$. Hence, $[L:K]=\#G\leq\#\aut_K(L)$, and $L/K$ is a Galois extension. 
    \end{example}

\begin{example} [$G$ \'etale-locally constant] 
    Assume that over a separable closure $\overline K$ of $K$, $G_{\overline K}$ is a constant group scheme. Write $P=\spec L$. Then
    there exist suitable field extensions $L'/K$ such that $L' \otimes_K L \cong L'^{\oplus \#G}$. 
    
        For example, if $G = \mu_3$,
    $K=\QQ$ and $L = \QQ(\sqrt[3]{2})$. Let $L' = L(\zeta_3)$. Then $P_{L'} = \spec(L \otimes_K L')$, and
    \begin{align*}
         L \otimes_K L' = \QQ(\sqrt[3]{2}) \otimes_\QQ \QQ(\sqrt[3]{2}, \zeta_3) &\cong \QQ[x]/(x^3-2) \otimes_\QQ \QQ(\sqrt[3]{2}, \zeta_3)\\
         &\cong \QQ(\sqrt[3]{2}, \zeta_3)[x]/(x^3-2) \cong \QQ(\sqrt[3]{2}, \zeta_3)^{\oplus 3},
    \end{align*}
   where the last isomorphism holds because $L'$ contains all the cube roots of 2.
   We will see in \S\ref{subsec:comparison} that $\mu_3$ becomes constant after base change to \(\QQ(\zeta_3)\).
   \end{example}

\subsection{The classifying stack $BG$}
\label{subsec:BG}

Let $G$ be a group scheme that is finite
over a base scheme $S$ and
acts trivially on $S$. The classifying stack $BG$ over $S$ is the quotient stack $[S/G]$. In this section, we briefly recall the definition of a quotient stack, and give interpretations of rational points of $BG$ in some special cases. As stated before, we work with \(S = \spec \ZZ\) for ease of exposition. Since \(BG_S\) is the base change of \(BG_{\spec \ZZ}\) to \(S\) for a general scheme \(S\), this reduction is harmless for this article. We refer the reader to \cite[\S 8.1]{Olsson-Book} for a more detailed exposition.

\subsubsection{Quotient stacks}
Let $X$ be a noetherian scheme. Let $G$ be an affine 
group scheme with a left 
action on $X$ given by $\rho: G \times X \to X$. 

\begin{definition}
    \label{defn:quot-stack}
    The quotient stack $[X/G]$ is the (pseudo)functor from the category of schemes to the category of groupoids,
\[
[X/G]: (Sch) \to (Gpds)
\]
such that 
\begin{enumerate}
\item \label{defn:quot-stack-1} On the level of objects, a scheme $T$ is sent to the category $[X/G](T)$, whose objects are the $G$-torsors $\pi: \mathcal E \to T$ together with a $G$-equivariant morphisms $\alpha: \mathcal E \to X$; and the morphisms of this category are isomorphisms of $G$-torsors commuting with the $G$-equivariant morphisms.
\item \label{defn:quot-stack-2} The morphisms are naturally induced by pullbacks along the morphism $f: T' \to T$ in $(Sch)$.
\end{enumerate}
\end{definition}
By setting $X = \spec \ZZ$ with the trivial $G$-action, we obtain the classifying stack $BG_{\spec \ZZ}:=[X/G]$ \cite[Definition 8.1.14]{Olsson-Book}.

\begin{notation}
\label{notation:BG-points}
    Although \(BG\) is a stack valued in groupoids, by abuse of notation, we will use \(BG(T)\) to denote the (pointed) set of isomorphism classes of equivariant \(G\)-torsors over \(T\). 
\end{notation}

When working with \(BG_{\spec \ZZ}\), the condition of $\alpha$ being $G$-equivariant becomes automatic. 
Indeed, \(\alpha\) must be the unique map \(\mathcal{E} \to \spec \ZZ\), which must in turn agree with the composition of the two \(G\)-equivariant maps \(\pi : \mathcal{E} \to T\) and the structure map \(T \to \spec \ZZ\).
In particular, this implies that \(BG_{\spec \ZZ}(T)\) is the set of isomorphism classes of \(G\)-torsors over \(T\). Further, it is well known 
that the latter set is classified by \(H^1_{fppf}(T, G)\). See for instance, \cite[\S 2.2.]{skorob-torsors} or \cite[Chapter 12]{Olsson-Book}.
In what follows, we will omit the subscript \(\spec \ZZ\) from \(BG_{\spec \ZZ}\).

\begin{example}[Twisting by Galois descent, see {\cite[\S 2.1]{skorob-torsors}} or {\cite[\S III.1]{serre-gal-coh}}]
\label{example:twisting-by-Galois}
    Let \(T  = \spec K \) and \(G\) a smooth finite 
    group scheme
    over $K$. 
    By the observation above and using the fact that $G$ is smooth, 
    we may classify the \(G\)-torsors over \(T\) by \(H^1_{fppf}(\spec K, G) = H^1_{\textit{\'et}}(\spec K, G)\), which in turn is equal to the Galois cohomology group \(H^1(\Gal{K}, G(\overline{K}))\). Given a \(G\)-torsor \(P\), and a continuous cocycle \(\varphi: \Gal{K} \to G(\overline{K})\), a standard way to obtain the twist of \(P\) by \(\varphi\) is by taking invariants of the twisted  Galois action,
    \[
    \Gal{K} \times P_{\overline{K}} \to P_{\overline{K}}, \qquad (\sigma, x) \mapsto \varphi(\sigma) \cdot (\sigma(x)).
    \]
    Conversely, any other \(G\) torsor over \(K\) can be obtained in this way. This construction will be useful in later sections.
\end{example}

\subsection{The stack $B\mu_n$}
\label{subsec:mu_n}

\subsubsection{The group scheme $\mu_n$} 
\label{subsubsec:mu_n}
Let $n$ be a positive integer, then $\mu_n:=\spec \ZZ[t]/(t^n-1)$ is a group scheme with the group structure given by 
$$
\spec \ZZ[t]/(t^n-1)\times_{\spec\ZZ}\spec \ZZ[t]/(t^n-1)\to \spec \ZZ[t]/(t^n-1), \qquad (a,b)\mapsto ab,
$$
which is induced by the ring homomorphism
$$
\varphi: \ZZ[t]/(t^n-1) \to \ZZ[u]/(u^n-1)\otimes_{\ZZ}\ZZ[v]/(v^n-1), \qquad t\mapsto u\otimes v.
$$
Indeed, for every field $K$, given two points with coordinates $a,b\in\mu_n(K)$, the ideal of the point $(a,b)\in\mu_{n,K}\times_{\spec K}\mu_{n,K}$ is $$I_{(a,b)}=((u-a)\otimes 1,1\otimes(v-b))\subseteq K[u]/(u^n-1)\otimes_KK[v]/(v^n-1).$$
We observe that 
$$\varphi(t-ab)=u\otimes v-ab\otimes 1=((u-a)\otimes 1)(1\otimes(v-b)) + (u-a)\otimes b + a\otimes(v-b)\in I_{(a,b)}.$$
Thus $t-ab\in\varphi^{-1}(I_{(a,b)})$ and since $(t-ab)$ is a maximal ideal in $K[t]/(t^n-a)$ we have $\varphi^{-1}(I_{(a,b)})=(t-ab)$. This shows that $\varphi$ is the ring homomorphism that defines the group structure on $\mu_n$.

\subsubsection{Rational points of $B\mu_n$} 
In this section, for any non-zero element \(a\) in a field \(K\), we will denote by \(a^{1/n} \in \ol{K}^{\times}\) a choice of an \(n\)-th root of \(a\). The following theorem describes the isomorphism classes of \(K\)-points on \(B\mu_n.\)

\begin{theorem}\label{thm:mu_n}
Let $K$ be a field with a fixed algebraic closure $\overline K$. Assume that $n$ is invertible in $K$, and let $\zeta_n$ be a primitive $n$-th root of $1$ in $\overline K$. Then there are bijections between the following objects:

\begin{enumerate}
\item the set $B\mu_n(K)$, and
\item the set $K^{\times}/(K^{\times})^{n}=H^1_{\textit{\'et}}(K,\mu_{n,K})$.
\end{enumerate}
The bijection is given by
$$
K^{\times}/(K^{\times})^{n} \to (B\mu_n)(K), \qquad [a] \mapsto [\spec K_{a^{-1}}], 
$$
where \(K_{a}:=K[t]/(t^n-a)\) for all $a\in K^\times$.
\end{theorem}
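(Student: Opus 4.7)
The plan is to (i) identify $B\mu_n(K)$ with a Galois cohomology group, (ii) apply classical Kummer theory to reach $K^\times/(K^\times)^n$, and (iii) match the resulting abstract bijection with the explicit torsor $\spec K_{a^{-1}}$ via Galois descent. For (i), since $n$ is invertible in $K$ the group scheme $\mu_{n,K}$ is \'etale and smooth, so by the discussion preceding Notation~\ref{notation:BG-points},
$$B\mu_n(K) \;=\; H^1_{fppf}(\spec K, \mu_{n,K}) \;=\; H^1_{\text{\'et}}(\spec K, \mu_{n,K}) \;=\; H^1(\Gal{K}, \mu_n(\overline K)),$$
which already provides the right-hand equality in (2).

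For (ii), I would invoke the Kummer sequence of \'etale sheaves on $\spec K$,
$$1 \to \mu_{n,K} \to \GG_{m,K} \xrightarrow{x \mapsto x^n} \GG_{m,K} \to 1,$$
which is exact on the right because $n$ is invertible in $K$. The associated long exact sequence of cohomology, combined with Hilbert~90 ($H^1_{\text{\'et}}(\spec K, \GG_m) = 0$), produces the isomorphism $K^\times/(K^\times)^n \xrightarrow{\sim} H^1(\Gal{K}, \mu_n(\overline K))$ sending $[a]$ to the class of the Kummer cocycle $\sigma \mapsto \sigma(a^{1/n})/a^{1/n}$ for any fixed choice of $n$-th root.

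For (iii), I would combine Example~\ref{example:twisting-by-Galois} with a direct trivialization of $\spec K_{a^{-1}}$ over $\overline K$. Concretely, the ring isomorphism
$$\overline K[u]/(u^n - 1) \xrightarrow{\sim} \overline K[t]/(t^n - a^{-1}), \qquad u \mapsto a^{1/n} t,$$
is $\mu_{n,\overline K}$-equivariant (both actions being by left multiplication), and transferring the natural Galois action on the target (which fixes $t$) back to the source yields the twisted Galois action of Example~\ref{example:twisting-by-Galois} by a representative of the Kummer cocycle attached to $[a^{-1}]$. By Galois descent, $\spec K_{a^{-1}}$ is therefore the twist of $\mu_{n,K}$ by that cocycle, so the composite $K^\times/(K^\times)^n \to H^1 \to B\mu_n(K)$ carries $[a]$ to $[\spec K_{a^{-1}}]$.

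The main difficulty I foresee is not conceptual but bookkeeping: the exponent $-1$ in $\spec K_{a^{-1}}$ versus $\spec K_a$ is sensitive to the conventions for left/right $\mu_n$-action on torsors, the sign of the Kummer boundary map, and the direction of the $\overline K$-trivialization, and must be pinned down by the explicit calculation above rather than extracted from the cohomological argument alone. Once the map is identified, well-definedness is routine, injectivity reduces to the elementary observation that $\spec K[t]/(t^n - c)$ is a trivial $\mu_n$-torsor iff $c \in (K^\times)^n$, and surjectivity is immediate from the Kummer long exact sequence.
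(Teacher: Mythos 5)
Your proof follows essentially the same route as the paper: identify $B\mu_n(K)$ with $H^1(\Gal{K},\mu_n(\overline K))$ via smoothness of $\mu_{n,K}$, apply the Kummer sequence and Hilbert~90, and then pin down the explicit correspondence by trivializing $\spec K_{a^{-1}}$ over $\overline K$ via the isomorphism $u \mapsto a^{1/n}t$ (the paper's $\psi_r$ at $r=0$) and reading off the resulting $1$-cocycle. One small bookkeeping slip, of exactly the kind you flagged: transferring the natural Galois action on $\overline K[t]/(t^n-a^{-1})$ back along $u\mapsto a^{1/n}t$ sends $u\mapsto \frac{\sigma(a^{1/n})}{a^{1/n}}\,u$, which is the Kummer cocycle of $[a]$, not of $[a^{-1}]$; with that correction the composite $[a]\mapsto\delta([a])\mapsto[\spec K_{a^{-1}}]$ is internally consistent and matches both your stated conclusion and the theorem.
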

\begin{remark}Note that $K_{a}$ is a field if and only if the polynomial $t^n-a$ is irreducible. Otherwise, $K_{a}$ is just an \'etale $K$-algebra of dimension $n$, as $\mathrm{char}(K)\nmid n$. 
By \cite[Theorem VI.9.1]{Lang-Algebra} irreducibility holds if and only if $a\notin K^p$ for any prime number $p\mid n$ and $a\notin -4k^4$ whenever $4\mid n$.
\end{remark}

\begin{proof}
As in \S \ref{subsec:BG} and Notation \ref{notation:BG-points}, $B\mu_{n}(K)$ denotes the set of isomorphism classes of torsors under $\mu_{n}$
over $K$. By  \cite[Proposition 4.5.6]{Olsson-Book} and \cite[Corollary III.4.7]{MR559531} (see also \cite[(2.11)]{skorob-torsors}), such isomorphism classes are classified by the flat cohomology group $H^1_{\textit{fl}}(K,\mu_{n,K})$. Since $n$ is coprime to the characteristic of $K$, the group scheme $\mu_{n,K}$ is smooth 
over $K$, and flat cohomology can be replaced by \'etale cohomology by \cite[Theorem III.3.9]{MR559531}. Hence, there is a bijection between $(B\mu_n)(K)$ and $H^1_{\textit{\'et}}(K,\mu_{n,K})$.

    We prove that $H^1_{\textit{\'et}}(K,\mu_{n,K})\cong K^\times/(K^\times)^n$ via étale cohomology on the Kummer exact sequence
\begin{equation}\label{eq:Kummer exact sequence}
1\to\mu_n(\overline K)\to \overline K^\times \stackrel{x \mapsto x^n}{\to} \overline K^\times \to 1.
\end{equation}
Since $K$ is a field, $H^1_{\textit{\'et}}(K,\mu_{n,K})$ can be computed as the Galois cohomology group $H^1(\Gal{K}, \mu_n(\overline K))$, where $\Gal{K}$ is the absolute Galois group of $K$.
Taking Galois cohomology of \eqref{eq:Kummer exact sequence} gives the long exact sequence
\[
1 \to H^0(\Gal{K}, \mu_n(\overline K)) \to H^0(\Gal{K}, \overline{K}^{\times}) \to H^0(\Gal{K}, \overline{K}^{\times}) \xrightarrow{\delta} H^1(\Gal{K}, \mu_n(\overline K)) \to H^1(\Gal{K}, \overline{K}^{\times}) \to \cdots
\]
Since $H^1(\Gal{K}, \overline{K}^{\times})=0$ by Hilbert 90 \cite[Proposition III.4.9]{MR559531}, we get the exact sequence
\[
1 \to \mu_n(\overline K) \cap K \to K^{\times} \stackrel{x \mapsto x^n}{\to} K^{\times} \xrightarrow{\delta} H^1(\Gal{K}, \mu_n(\overline K)) \to 0,
\]
which shows that 
$K^{\times}/(K^{\times})^n \cong H^1(\Gal{K}, \mu_n(\overline K))$, via the boundary map, \(\delta\). 

Consider the function
$$
K^\times/(K^\times)^n \to \{K_a:a\in K^{\times}\}/\sim, \qquad [a]\mapsto [K_{a^{-1}}],
$$
where $\sim$ indicates isomorphisms of $K$-algebras.
We will show that for any \(a \in K^{\times}\), $K_{a^{-1}}$ is a $\mu_{n,K}$-torsor over $K$ corresponding to the image $\delta([a]) \in H^1_{\textit{\'et}}(K,\mu_{n,K})$, thus showing that the diagram 
\begin{equation*}
    \begin{tikzcd}
    & B\mu_n(K) = \left\{ \substack{ \text{isomorphism classes } \\ \text{ of }\mu_n \text{ torsors over } K 
    }
    \right\} & \\
    &{ }   & \\
    K^{\times}/(K^{\times})^n  
    \arrow[rr, "\delta", "\sim" {swap}] \arrow[uur, "{[a]} \mapsto \spec K_{a^{-1}} " {anchor=south, rotate=28} ] & & H^1_{\textit{\'et}}(K,\mu_{n,K})  =\left\{ \substack{\text{Galois cohomology}\\ \text{classes}} \right\}  \arrow[uul, "\sim" {anchor=north, rotate=150}] 
    \end{tikzcd}
\end{equation*}
commutes and that the map \(a \mapsto K_{a^{-1}}\) is indeed a bijection.

Fix $\alpha\in\overline K$ such that $\alpha^n=a$. 
For every $r\in\{0,1,\dots,n-1\}$, there is a cocycle
$$\delta_r:\Gamma_K\to\mu_n(\overline K), \qquad \sigma\mapsto \sigma(\zeta_n^r\alpha)(\zeta_n^r\alpha)^{-1},$$ 
whose image in \(H^1(\Gal{K}, \mu_{n,K})\) agrees with \(\delta([a])\) by \cite[Remark 3.2.4]{Gille-Szamueli} and is independent of \(r\) since different choices of \(r\) yield cohomologous cocycles. 
Further, \(\delta_r\) induces a twisted action of \(\Gal{K}\) on \(\ol{K}[t]/(t^n -1)\) as follows: an element $\sigma \in \Gal{K}$ acts by the $K$-automorphism 
$\delta_{r,\sigma}$ defined by $\delta_{r,\sigma}(\beta)=\sigma(\beta)$ for all $\beta\in\overline K$ and $\delta_{r,\sigma}(t)=\delta_r(\sigma)t$.
Consider the isomorphism of $\overline K$-algebras 
\begin{equation}\label{eq:isom_mu_n}
\psi_r:\overline K[t]/(t^n-1)\to\overline K[t]/(t^n-a^{-1}), \qquad t\to \zeta_n^r\alpha t.\end{equation}
Then the diagram
\begin{equation*}
\xymatrix{
\overline K[t]/(t^n-1) \ar[d]^{\delta_{r,\sigma}} \ar[r]^{\psi_r} & \overline K[t]/(t^n-a^{-1}) \ar[d]^{\sigma}\\
\overline K[t]/(t^n-1) \ar[r]^{\psi_r} & \overline K[t]/(t^n-a^{-1})
},
\end{equation*}
commutes. Thus, the ring of invariants of the twisted Galois action corresponding to \(\delta_r\) is given by  \(K_{a^{-1}} = K[t]/(t^n - a^{-1})\), which is the required $K$-form of $\mu_{n,K}$ (cf.~\cite[\S III.1.3]{serre-gal-coh} or Example \ref{example:twisting-by-Galois}).
On the level of group schemes, $\spec K_{a^{-1}}$ is a $\mu_n$-torsor over $K$ with the torsor structure given explicitly by 
\begin{equation*}
\label{eq:isom mu_n}
K[t]/(t^n-a^{-1})\to K[t]/(t^n-a^{-1})\otimes_K K[u]/(u^n-1),\qquad t\mapsto t\otimes u. \qedhere
\end{equation*}

\end{proof}

\begin{remark}
    Let $\zeta_n\in\overline K$ be a fixed $n$-th root of $1$. For every $a\in K$, the extension $K(\zeta_n)\subseteq K(a^{\frac 1n},\zeta_n)$ is Galois, and hence uniquely determined by $a$, rather than by the choice of an $n$-th root of $a$. We denote by $(\star)$ the set of subfields of $\overline K$ of the form $K(a^{1/n}, \zeta_n)$ together with the $\mu_n(\overline K)$-action that sends $a^{\frac 1n}$ to $\zeta_n^{\frac nd} a^{\frac 1n}$, where $d=[K(a^{1/n}, \zeta_n):K(\zeta_n)]$.
    We observe that there is a surjective function 
    \begin{equation}\label{eq:identification}
K^{\times}/(K^{\times})^{n} \to (\star), \qquad  [a] \mapsto K(a^{1/n}, \zeta_n).
\end{equation}
 The Galois group $\gal(K(a^{\frac 1n},\zeta_n)/K(\zeta_n))$ acts on the roots of $a$ by multiplication by \(d\)-th roots of unity
where $d=[K(a^{\frac 1n},\zeta_n):K(\zeta_n)]$. 
Let $a,b\in K^\times$ such that \(K(a^{1/n}, \zeta_n) = K(b^{1/n}, \zeta_n)\). Let $\alpha,\beta\in \overline K$ such that $\alpha^n=a$ and $\beta^n=b$ and the \(\mu_n(\ol{K})\) action is given by \(\zeta_n \cdot \alpha = \zeta_n^{n/d}\alpha\) and \(\zeta_n \cdot \beta = \zeta_n^{n/d}\beta\). Let $d=[K(\alpha,\zeta_n):K(\zeta_n)]$.
Since $d\mid n$ and $\alpha^d, \beta^d\in K(\zeta_n)$ by \cite[Theorem VI.6.2]{Lang-Algebra}, $\beta,\beta^2,\dots,\beta^{d}$ is a basis of $K(\beta,\zeta_n)$ over $K(\zeta_n)$. 
Write $\alpha=\sum_{i=1}^{d}b_i\beta^i$ with $b_1,\dots,b_d\in K(\zeta_n)$. Since each element of $\gal(K(a^{\frac 1n},\zeta_n)/K(\zeta_n))$ acts on both $\alpha$ and $\beta$ by multiplication by the same power of $\zeta_n^{\frac nd}$, we get that $b_2=\dots=b_d=0$ and $b_1\neq 0$.
Let $\gamma=b_1$, then $\gamma\in K(\zeta_n)^\times$ and $\gamma^n=a/b\in K.$ 
We conclude that the function \eqref{eq:identification} factors through $K(\zeta_n)^{\times}/(K(\zeta_n)^{\times})^{n}$ and that \eqref{eq:identification} is injective if and only if the group homomorphism 
\begin{equation}\label{eq:K/K^n}
K^{\times}/(K^{\times})^{n}\to K(\zeta_n)^{\times}/(K(\zeta_n)^{\times})^{n}
\end{equation}
induced by the inclusion $K\subseteq K(\zeta_n)$ is injective. Indeed, the injectivity of the induced map from $K(\zeta_n)^{\times}/(K(\zeta_n)^{\times})^{n}$ to $(\star)$ follows from Kummer Theory (see for instance \cite[Corollary IV.3.6]{neukirch}). If $K=\mathbb Q$ and $n=3$, the homomorphism \eqref{eq:K/K^n} is injective, as $(c+d\zeta_3)^3\in\mathbb Q$ if and only if $cd=0$ or $c=d$, and in the latter case, we get $(c+c\zeta_3)^3=(-c)^3$. If $K=\mathbb Q$ and $n=4$, the homomorphism \eqref{eq:K/K^n} is not injective, as $-4=(1+\zeta_4)^4$ is a $4$-th power in $\mathbb Q(\zeta_4)$ but not in $\mathbb Q$. More generally, since the map \eqref{eq:K/K^n} is the restriction map on cohomology, its kernel is \(H^1(\gal(K(\zeta_n)/K), \mu_n)\), by the inflation restriction sequence (\cite[Corollary 2.4.2]{neukirch-coh-number-fields}).
\end{remark}

\begin{remark}
    Note that $\spec K(a^{\frac 1n},\zeta_n)\to\spec K$ does not need to be a $\mu_n$-torsor if $\zeta_n\notin K$, because a trivialization is given by $\overline K\otimes_K K(a^{\frac 1n},\zeta_n)\cong (\overline K)^{dr}$ where $d=[K(a^{\frac 1n},\zeta_n):K(\zeta_n)]$ and $r=[K(\zeta_n):K]$, while the trivial torsor is given by $\overline K[t]/(t^n-1)\cong (\overline K)^n$.
\end{remark}

\begin{remark}
    We refer the more categorically inclined reader to \cite[Example 4.5.8]{Olsson-Book} for a description of the category of \(\mu_n\)- torsors over a scheme \(S\) in which \(n\) is invertible. 
\end{remark}

\subsection{The stack $B(\ZZ/n\ZZ)$} 
\label{subsec:Z/nZ}

\subsubsection{The constant group scheme $\underline {\ZZ/n\ZZ}$}
Given a positive integer $n$, we denote by $B(\ZZ/n\ZZ)$ the classifying stack of the constant group scheme $\underline {\ZZ/n\ZZ}$ defined by the finite cyclic group $\ZZ/n\ZZ$. Then $\underline{\ZZ/n\ZZ}=\spec\ZZ^n$, where $\ZZ^n$ is the free $\ZZ$-module of rank $n$ with a fixed basis $\{e_i:i\in\ZZ/n\ZZ\}$ and ring structure given by componentwise addition and multiplication, i.e.,  $$\sum_{i\in\ZZ/n\ZZ}a_ie_i+\sum_{i\in\ZZ/n\ZZ}b_ie_i=\sum_{i\in\ZZ/n\ZZ}(a_i+b_i)e_i, \qquad \left(\sum_{i\in\ZZ/n\ZZ}a_ie_i\right)\left(\sum_{i\in\ZZ/n\ZZ}b_ie_i\right)=\sum_{i\in\ZZ/n\ZZ}(a_ib_i)e_i,$$
for all $a_i,b_i\in\ZZ$ and $i\in\ZZ/n\ZZ$. We observe that the ring structure on $\ZZ^n\otimes_{\ZZ}\ZZ^n\cong\ZZ^{n^2}$ is given by componentwise addition and multiplication for the basis $\{e_i\otimes e_j:i,j\in\ZZ/n\ZZ\}$.

\begin{lemma}
    The group scheme structure on $\underline{\ZZ/n\ZZ}$ is induced by 
    $$\varphi:\ZZ^n\to\ZZ^n\otimes_{\ZZ}\ZZ^n, \qquad \sum_{i\in\ZZ/n\ZZ}a_ie_i\mapsto \sum_{i,j\in\ZZ/n\ZZ}a_{i+j}e_i\otimes e_j.$$
\end{lemma}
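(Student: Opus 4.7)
The plan is to mirror exactly the strategy used for $\mu_n$ in \S\ref{subsubsec:mu_n}: write down $\varphi$ explicitly on the basis $\{e_k\}$, check it is a well-defined ring homomorphism, and verify on $K$-points (for every field $K$) that the induced map recovers the addition on $\ZZ/n\ZZ$. Because the point scheme $\underline{\ZZ/n\ZZ}$ is a disjoint union of copies of $\spec\ZZ$, checking on field-valued points is actually sufficient: a morphism of affine schemes is determined by its pullback on functions, and the pullback is determined by its values on the idempotent basis, which is in turn pinned down by the behavior on closed points of each factor.

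First, I would record the explicit formula $\varphi(e_k)=\sum_{i+j=k}e_i\otimes e_j$, which follows by taking $a_i=\delta_{ik}$ in the definition. Then I would verify that $\varphi$ is a ring homomorphism. Additivity is immediate from the definition. Using that $\{e_i\}$ and $\{e_i\otimes e_j\}$ are systems of orthogonal idempotents summing to $1$, the computation
\[
\varphi(e_k)\varphi(e_\ell)=\sum_{\substack{i+j=k\\ i'+j'=\ell}}(e_ie_{i'})\otimes(e_je_{j'})=\delta_{k\ell}\sum_{i+j=k}e_i\otimes e_j=\delta_{k\ell}\varphi(e_k)=\varphi(e_ke_\ell)
\]
handles multiplicativity, and $\varphi(1)=\varphi(\sum_k e_k)=\sum_{i,j}e_i\otimes e_j=(\sum_ie_i)\otimes(\sum_j e_j)=1\otimes 1$ handles the unit.

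Next, following the $\mu_n$ template, I would verify on $K$-points that the induced morphism of schemes $\underline{\ZZ/n\ZZ}_K\times_{\spec K}\underline{\ZZ/n\ZZ}_K\to \underline{\ZZ/n\ZZ}_K$ really is the addition in $\ZZ/n\ZZ$. For $a\in\ZZ/n\ZZ$, the corresponding $K$-point of $\underline{\ZZ/n\ZZ}$ is the maximal ideal $I_a=\langle e_j:j\neq a\rangle$ of $K^n$, and the $K$-point $(a,b)$ of the fiber product corresponds to $I_{(a,b)}=\langle e_i\otimes e_j:(i,j)\neq(a,b)\rangle$. For any $k\neq a+b$, every summand $e_i\otimes e_j$ of $\varphi(e_k)$ satisfies $i+j=k\neq a+b$, hence $(i,j)\neq(a,b)$, so $\varphi(e_k)\in I_{(a,b)}$. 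This gives $I_{a+b}\subseteq\varphi^{-1}(I_{(a,b)})$, and since $\varphi^{-1}(I_{(a,b)})$ is proper (as $\varphi(1)=1$) while $I_{a+b}$ is maximal, equality follows. Thus $\varphi$ pulls the point $(a,b)$ back to $a+b$, as required.

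I do not anticipate a genuine obstacle here; the lemma is a direct bookkeeping verification. The only thing to be careful about is keeping track of indices and writing $\varphi$ on the idempotent basis rather than on a general element, so that multiplicativity and the field-point computation become transparent. One could alternatively phrase the proof entirely via the Yoneda/functor-of-points picture by identifying $\underline{\ZZ/n\ZZ}(T)=\{\text{locally constant maps }T\to\ZZ/n\ZZ\}$ for connected $T$ and checking that $\varphi$ induces pointwise addition, but the field-point argument above is closer in spirit to the treatment of $\mu_n$ already in the paper.
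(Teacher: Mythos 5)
Your proposal is correct and follows essentially the same route as the paper: you both verify the assertion on $K$-valued points by computing $\varphi^{-1}$ of the maximal ideal corresponding to a point $(a,b)$ of the product, and conclude by maximality that it equals the ideal of $a+b$. The only meaningful difference is bookkeeping: the paper works with the single generator $\sum_{i}c_{i,\alpha+\beta}e_i = 1 - e_{\alpha+\beta}$ of $I_{\alpha+\beta}$ and shows its image equals $v_\alpha + v_\beta - v_\alpha v_\beta$, whereas you check each basis idempotent $e_k$ with $k \neq a+b$ individually; these describe the same ideal. You also add an explicit check that $\varphi$ is a ring homomorphism (multiplicativity and unit on the orthogonal idempotent basis), which the paper leaves implicit — a harmless and arguably clarifying addition.
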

\begin{proof}
    If $K$ is a field then $\underline{\ZZ/n\ZZ}(K) \cong \ZZ/n\ZZ$. If we denote by $P_\alpha \in \underline{\ZZ/n\ZZ}(K)$ the preimage under this map of $\alpha\in\ZZ/n\ZZ$, the group scheme structure is given by 
    $$\underline{\ZZ/n\ZZ}(K)\times \underline{\ZZ/n\ZZ}(K)\to \underline{\ZZ/n\ZZ}(K),\qquad (P_\alpha,P_\beta)\mapsto P_{\alpha+\beta}.$$
    For $i,j\in\ZZ/n\ZZ$, let $c_{i,j}=0$ if $i=j$ and $c_{i,j}=1$ if $i\neq j$.
    In the chosen notation, for $\alpha\in\ZZ/n\ZZ$ the ideal of the point $P_{\alpha}$ is $\{\sum_{i\in\ZZ/n\ZZ}a_ie_i:a_{\alpha}=0\}\subseteq K^n$ and it is generated by $\sum_{i\in\ZZ/n\ZZ}c_{i,\alpha}e_i$. Fix $\alpha,\beta\in\ZZ/n\ZZ$, then the ideal of $(P_\alpha,P_\beta)\in \underline{\ZZ/n\ZZ}(K)\times \underline{\ZZ/n\ZZ}(K)$ is the ideal generated by the two elements $v_\alpha:=\sum_{i,j\in\ZZ/n\ZZ}c_{i,\alpha}e_i\otimes e_j$ and  $v_\beta:=\sum_{i,j\in\ZZ/n\ZZ}c_{j,\beta}e_i\otimes e_j$. We observe that $\varphi\left(\sum_{i\in\ZZ/n\ZZ}c_{i,\alpha+\beta}e_i\right)= \sum_{i,j\in\ZZ/n\ZZ}c_{i+j,\alpha+\beta}e_i\otimes e_j$ belongs to the ideal generated by $v_\alpha$ and $v_\beta$. Indeed,
    $$v_\alpha+v_\beta-v_\alpha v_\beta=\sum_{i,j\in\ZZ/n\ZZ}(c_{i,\alpha}+c_{j,\beta}-c_{i,\alpha}c_{j,\beta})e_i\otimes e_j,$$
    as $c_{i+j,\alpha+\beta}=0$ if $i=\alpha$ and $j=\beta$ and $c_{i,\alpha}+c_{j,\beta}-c_{i,\alpha}c_{j,\beta}=0$ if and only if $i=\alpha$ and $j=\beta$.
    Since the ideal generated by $\sum_{i\in\ZZ/n\ZZ}c_{i,\alpha+\beta}e_i$ is a maximal ideal of $K^n$, we conclude that it is equal to $\varphi^{-1}((v_\alpha,v_\beta))$.
\end{proof}

\subsubsection{$\underline {\ZZ/n\ZZ}$-torsors}

Since $\underline{\ZZ/n\ZZ}_K =\spec(K^n)$ is finite and \'etale over $K$, so is every $\underline{\ZZ/n\ZZ}$-torsor over $K$ (see \cite[Proposition III.4.2]{MR559531}). Thus, torsors under $\underline{\ZZ/n\ZZ}$ are Galois coverings of $\spec K$ with Galois group $\ZZ/n\ZZ$ in the sense of \cite[Remark I.5.4]{MR559531}. In the following theorem, we give some equivalent ways of describing this set.

\begin{theorem}\label{thm:Z/nZ}
Let $K$ be a field of characteristic zero with a fixed algebraic closure $\overline K$. Then there is a bijection between the following objects:
\begin{enumerate}
\item the set of $K$-points of the stack $B(\ZZ/n\ZZ)$;
\item the set of classes of continuous $1$-cocycles
$\Hom_{cts}(\Gal{K}, \ZZ/n\ZZ)$, where $\Gal{K}$ is equipped with the profinite topology
and $\ZZ/n\ZZ$  is a discrete topological space;
\item the set of isomorphism classes of finite \'etale \(\ZZ/n\ZZ\)-algebras $L/K$ .
\end{enumerate}
\end{theorem}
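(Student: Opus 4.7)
The plan is to establish (1)$\Leftrightarrow$(2) by the same Galois cohomology argument used at the beginning of the proof of Theorem \ref{thm:mu_n}, and then to obtain (1)$\Leftrightarrow$(3) as a direct extension of Example \ref{example:BG constant} to the possibly disconnected case.

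For (1)$\Leftrightarrow$(2): the constant group scheme $\underline{\ZZ/n\ZZ}$ is a disjoint union of $n$ copies of $\spec\ZZ$ and hence finite étale (in particular smooth) over $\spec\ZZ$. Applying \cite[Proposition 4.5.6]{Olsson-Book} together with \cite[Corollary III.4.7 and Theorem III.3.9]{MR559531}, we identify $B(\ZZ/n\ZZ)(K)$ with $H^1_{\textit{fl}}(K, \underline{\ZZ/n\ZZ}_K) = H^1_{\textit{\'et}}(K, \underline{\ZZ/n\ZZ}_K) = H^1(\Gal{K}, \ZZ/n\ZZ)$, where the Galois action on $\underline{\ZZ/n\ZZ}(\ol K) = \ZZ/n\ZZ$ is trivial because $\underline{\ZZ/n\ZZ}$ is a constant group scheme. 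Under a trivial action the cocycle condition degenerates to the homomorphism condition $c(\sigma\tau)=c(\sigma)+c(\tau)$, and every coboundary $\sigma\mapsto\sigma(a)-a$ vanishes; hence $H^1(\Gal{K},\ZZ/n\ZZ) = \Hom_{cts}(\Gal{K},\ZZ/n\ZZ)$ as sets.

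For (1)$\Leftrightarrow$(3): Example \ref{example:BG constant} already furnishes the relevant bijection between constant-group torsors and étale $G$-algebras, and the task is to reproduce that argument without the assumption that $L$ is a field. Given a $\underline{\ZZ/n\ZZ}$-torsor $\spec L$, the torsor isomorphism $\underline{\ZZ/n\ZZ}_K \times_K \spec L \cong \spec L \times_K \spec L$ identifies $L \otimes_K L$ with $L^n$ as $L$-algebras, so $[L:K]=n$ and $L/K$ is étale, while the $\underline{\ZZ/n\ZZ}$-action produces an injection $\ZZ/n\ZZ \hookrightarrow \aut_K(L)$ exactly as in the field case. The primitive idempotents of $L$ are in bijection with the $\ol K$-points of $\spec L$, which form a principal homogeneous space under $\underline{\ZZ/n\ZZ}(\ol K)=\ZZ/n\ZZ$, so the induced action on idempotents is transitive. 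Conversely, an étale $\ZZ/n\ZZ$-algebra $L$ satisfies $\ol K\otimes_K L\cong\ol K^n$ with $\ZZ/n\ZZ$ acting simply transitively on the factors, and this makes $\underline{\ZZ/n\ZZ}_K\times_K\spec L\to\spec L\times_K\spec L$ an isomorphism after base change to $\ol K$, hence by étale descent also over $K$. Alternatively, one can pass directly between (2) and (3) by sending $\varphi$ to the $\Gal{K}$-invariants of the twisted action $(\sigma \cdot x)_a = \sigma(x_{a-\varphi(\sigma)})$ on $\ol K^{\ZZ/n\ZZ}$, equipped with the coordinate permutation action of $\ZZ/n\ZZ$.

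The main obstacle I anticipate is the bookkeeping in (1)$\Leftrightarrow$(3) when $L$ is disconnected, i.e.\ when the associated $\varphi$ fails to be surjective: one has to verify carefully that the $\ZZ/n\ZZ$-action remains transitive on the idempotents (rather than merely faithful on each connected component) and that the two constructions are mutually inverse on isomorphism classes. Once the triviality of the Galois action on $\underline{\ZZ/n\ZZ}(\ol K)$ is recognized and Example \ref{example:BG constant} is in hand, the remaining steps are essentially formal.
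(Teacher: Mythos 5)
Your treatment of (1)$\Leftrightarrow$(2) matches the paper: both exploit the smoothness of $\underline{\ZZ/n\ZZ}$ to identify torsors with $H^1_{\textit{\'et}}(K, \underline{\ZZ/n\ZZ}_K) = H^1(\Gal{K}, \ZZ/n\ZZ)$, and then note that the trivial $\Gal{K}$-action collapses $H^1$ to $\Hom_{cts}(\Gal{K}, \ZZ/n\ZZ)$.

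For the third set you take a genuinely different route. The paper passes through (2)$\to$(3), constructing the \'etale algebra by the explicit formula $\varphi \mapsto \prod_{r\in\coker\varphi}\ol K^{\ker\varphi}$, arguing injectivity and surjectivity directly (citing \cite[Lemma 2.6]{wood-probabilities}), and then writes down the twisted-Galois-action description of (3)$\to$(1). You instead unpack the torsor isomorphism $\underline{\ZZ/n\ZZ}_K \times_K \spec L \xrightarrow{\sim} \spec L \times_K \spec L$ to get $[L:K]=n$, the embedding $\ZZ/n\ZZ \hookrightarrow \aut_K(L)$, and transitivity on idempotents, and conversely recover the torsor axiom from the $\ZZ/n\ZZ$-algebra structure by faithfully flat descent. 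This structural argument avoids cocycle bookkeeping, whereas the paper's explicit formulas feed directly into the height computations later in the article; both are legitimate and reach the same bijection.

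One slip to repair in your direct argument: the primitive idempotents of $L$ are in bijection with the connected components of $\spec L$, which are the $\Gal{K}$-orbits of $\ol K$-points, not with the $\ol K$-points themselves (e.g.\ $L = \QQ(i)$ as a $\ZZ/2\ZZ$-torsor over $\QQ$ has one primitive idempotent but two $\ol\QQ$-points). The conclusion you want still holds, because the simply transitive $\ZZ/n\ZZ$-action on $\ol K$-points commutes with the $\Gal{K}$-action and transitivity descends to the orbit set. Symmetrically, in the converse direction Wood's definition only gives transitivity on idempotents of $L$, so to obtain the simply transitive action on the factors of $\ol K\otimes_K L \cong \ol K^n$ you should note that since $\ZZ/n\ZZ$ is abelian the stabilizer $H$ of an idempotent is independent of the component, $H$ acts faithfully on each component, and by a degree count $H$ is therefore the full Galois group of each component, forcing the action on $\ol K$-points to be free and transitive.
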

\begin{proof}
As in the proof of Theorem \ref{thm:mu_n}, since $\underline{\ZZ/n\ZZ}$ is smooth over $K$, the isomorphism classes of torsors under $\underline{\ZZ/n\ZZ}$ over $K$ are classified by $H^1_{\textit{\'et}}(K,\underline{\ZZ/n\ZZ})=H^1(\Gal{K},\ZZ/n\ZZ)$. Since the $\Gal{K}$-module structure on $\ZZ/n\ZZ$ is given by the trivial action of $\Gal{K}$, the 1-cocycles are just continuous group homomorphisms $\Gal{K}\to \ZZ/n\ZZ$ and the coboundaries are trivial (cf.~\cite[\S I.5.1]{serre-gal-coh}). Thus $H^1(\Gal{K},\ZZ/n\ZZ)=\Hom_{cts}(\Gal{K},\ZZ/n\ZZ)$ is in bijection with $B(\ZZ/n\ZZ)(K)$. This establishes the bijection between (1) and (2).

The map between (2) and (3) is given by
\begin{align*}
   \left(\varphi : \Gal{K} \to \ZZ/n\ZZ \right) \mapsto \prod_{r\in \coker\varphi}\ol{K}^{\ker\varphi}.
\end{align*}
Suppose the image of \(\varphi\) is isomorphic to \(m\ZZ/n\ZZ\) for some \(m \in \ZZ\), whence its cokernel is isomorphic to \(\ZZ/m\ZZ\). Then $K_\varphi:=\overline K^{\ker\varphi}$ is Galois over \(K\) with Galois group $m\ZZ/n\ZZ$, and this action on \(K_{\varphi}\) together with the permutation action of $\ZZ/m\ZZ$ on the factors induces a \(\ZZ/n\ZZ\)-action on the \'etale algebra \(L:= \prod_{r \in \ZZ/m\ZZ} K_{\varphi} \),
thus giving it the structure of an \'etale \(\ZZ/n\ZZ\)-algebra. If \(\varphi \neq \varphi'\) such that \(\ker \varphi = \ker \varphi'\), then $K_\varphi=K_{\varphi'}$ as subsets of $\overline K$, and the images \(\im(\varphi)\) and \(\im(\varphi')\) are isomorphic to \(m\ZZ/n\ZZ\) for some \(m \in \ZZ\). However, the induced actions of $\im(\varphi)$ and $\im(\varphi')$ on \(L\) are different and correspond to picking different generators of \(m\ZZ/n\ZZ\). Thus, they give different maps \(\ZZ/n\ZZ \hookrightarrow \aut_K(L)\), establishing that the map between (2) and (3) is an injection. The surjectivity of the map follows from the definition of a \(\ZZ/n\ZZ\)-algebra: since \(\ZZ/n\ZZ\) must act transitively on the idempotents of the \'etale algebra, it must be of the form \(\prod_{r \in \ZZ/m\ZZ} \ol{K}^{H}\) for some \(m|n\) and \(H\) a subgroup of \(\Gal{K}\) of index $n/m$. The Galois correspondence \cite[Theorem VI.1.1, \S VI.14]{Lang-Algebra} then finishes the proof. This proof is a special case of that of \cite[Lemma 2.6]{wood-probabilities}.

We now describe explicitly the map between (3) and (1).
Denote by \(\{e_i\}_{i\in\ZZ/n\ZZ}\) the standard basis vectors of \(\overline{K}^n \cong \overline{K} \otimes_{\ZZ} \ZZ^n\). Given \(\varphi \in \Hom_{cts}(\Gal{K}, \ZZ/n\ZZ)\), let \(K_{\varphi} = \ol{K}^{\ker \varphi}\) and write $[K_\varphi:K]=n/m$ as before. The \'etale algebra $\prod_{r\in\ZZ/m\ZZ}K_{\varphi}$ is the ring of invariant elements of $\overline K^n$ under the twisted Galois action 
$$
\Gal{K}\times\overline K^n\to\overline K^n,\qquad (\sigma,v)\mapsto \varphi(\sigma)(\sigma(v)),
$$
where \(\varphi(\sigma)\) acts on $\overline{K}^n$ by sending \(e_i \mapsto e_{i+\varphi(\sigma)}\). This describes the \'etale algebra as a twist of the trivial torsor in the sense of Example \ref{example:twisting-by-Galois}. In order to describe the action of the group  {scheme} \(\underline{\ZZ/n\ZZ}\),
let $[\cdot]:\ZZ/n\ZZ\to\ZZ/m\ZZ$ be the reduction modulo $m$. The elements of $\prod_{r\in\ZZ/m\ZZ}K_{\varphi}$ can be written uniquely as $\sum_{r\in\ZZ/m\ZZ}v_r$, where $v_r\in K_\varphi$ and $v_r=\sum_{i\in\ZZ/n\ZZ, [i]=r}a_{i}e_i$ as an element of $\overline K^n$. For every $j\in\ZZ/n\ZZ$, the shift $v_r(j)=\sum_{i\in\ZZ/n\ZZ,[i]=r}a_{i+j}e_i$ is an element of $K_\varphi$.
Then $\prod_{r\in\ZZ/m\ZZ}K_{\varphi}$ defines a torsor under $\ZZ/n\ZZ$ with the following torsor structure:
\begin{equation*}
\prod_{r\in\ZZ/m\ZZ}K_{\varphi}\to \prod_{r\in\ZZ/m\ZZ}K_{\varphi}\otimes K^n,\qquad \sum_{r\in\ZZ/m\ZZ}v_r\mapsto \sum_{r\in\ZZ/m\ZZ}\sum_{j\in\ZZ/n\ZZ}v_{r}(j)\otimes e_j. \qedhere
\end{equation*}
\end{proof}

\subsection{Comparison between rational points on $B\mu_n$ and $B\ZZ/n\ZZ$} 
\label{subsec:comparison}
Let $K$ be a field of characteristic zero with a fixed algebraic closure $\overline K$. Let $\zeta_n\in\overline K$ be a primitive $n$-th root of 1.

\begin{lemma}
\label{rmk:mu_n isom Z/nZ}
If $\zeta_n\in K$, then $B\mu_n\cong B(\ZZ/n\ZZ)$ over $K$.
\end{lemma}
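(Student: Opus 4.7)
\begin{proofoutline}
The plan is to first exhibit a group scheme isomorphism $\mu_{n,K}\cong\underline{\ZZ/n\ZZ}_K$ under the hypothesis $\zeta_n\in K$, and then to deduce the stack-level isomorphism by functoriality of $B(-)$.

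First I would write down the candidate morphism. Since $\zeta_n\in K$ and $n$ is invertible in $K$, the polynomial $t^n-1$ splits into distinct linear factors $\prod_{i\in\ZZ/n\ZZ}(t-\zeta_n^i)$, so the Chinese Remainder Theorem yields a $K$-algebra isomorphism
\[
\psi: K[t]/(t^n-1)\xrightarrow{\sim} K^n,\qquad t\mapsto \sum_{i\in\ZZ/n\ZZ}\zeta_n^i e_i,
\]
where $\{e_i\}$ is the standard basis. This induces an isomorphism of $K$-schemes $\spec\psi:\underline{\ZZ/n\ZZ}_K\to\mu_{n,K}$; on $K$-points it is the expected map $i\mapsto \zeta_n^i$.

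Next I would verify compatibility with the group scheme structures. Using the formulas for the comultiplications recalled in \S\ref{subsubsec:mu_n} and \S\ref{subsec:Z/nZ}, a direct computation shows that on the one hand $t\mapsto u\otimes v$ is sent via $\psi\otimes\psi$ to $\sum_{i,j}\zeta_n^{i+j}\,e_i\otimes e_j$, and on the other hand $\psi(t)=\sum_k \zeta_n^k e_k$ is sent by the comultiplication of $\underline{\ZZ/n\ZZ}$ to $\sum_{i,j}\zeta_n^{i+j}\,e_i\otimes e_j$. Thus the two composites agree and $\psi$ is an isomorphism of Hopf algebras, i.e.\ $\spec\psi$ is an isomorphism of group schemes over $K$.

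Finally, any isomorphism of finite smooth group schemes $\varphi:G_1\xrightarrow{\sim}G_2$ over $K$ induces an isomorphism of classifying stacks $B\varphi:BG_1\xrightarrow{\sim}BG_2$: on $T$-points it sends a $G_1$-torsor $P$ to its pushout $P\times^{G_1}G_2$ along $\varphi$, with quasi-inverse given by pushing out along $\varphi^{-1}$. Applying this to the isomorphism constructed above gives $B\mu_n\cong B(\ZZ/n\ZZ)$ over $K$. The only step requiring any care is the compatibility with comultiplication; everything else is formal.
\end{proofoutline}
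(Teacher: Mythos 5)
Your proposal is correct and matches the paper's approach: the paper constructs precisely the same Hopf algebra isomorphism $K[t]/(t^n-1)\to K^n$, $t^s\mapsto\sum_i\zeta_n^{si}e_i$ (with the inverse given by the discrete Fourier transform), and verifies the comultiplication compatibility by the same computation. You additionally spell out the final formal step (pushout of torsors along a group scheme isomorphism gives the stack isomorphism), which the paper leaves implicit; that added detail is accurate and harmless.
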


\begin{proof}
We observe that if $\zeta_n\in K$, then there is an isomorphism of group schemes $\underline{\ZZ/n\ZZ}\to\mu_{n,K}$ given by
$$
K[t]/(t^n-1)\to K^n,\qquad t^s\mapsto \sum_{i\in\ZZ/n\ZZ}\zeta_n^{si}e_i,
$$
with inverse
\begin{equation}\label{eq:isom comparison}
K^n\to K[t]/(t^n-1),\qquad \sum_{i\in\ZZ/n\ZZ}a_ie_i\mapsto \frac 1 n \sum_{s=0}^{n-1}\sum_{i\in\ZZ/n\ZZ}a_i\zeta_n^{-si}t^s.
\end{equation}
Indeed, using the notation in \S \ref{subsec:Z/nZ}, the isomorphism is compatible with the group structures as
$$
\frac 1 n \sum_{r=0}^{n-1}\sum_{i\in\ZZ/n\ZZ} a_i \zeta_n^{-ri}(u\otimes v)^r
= \frac 1 {n^2} \sum_{r=0}^{n-1}\sum_{s=0}^{n-1}\sum_{i,j\in\ZZ/n\ZZ}a_{i+j}\zeta_n^{-ri-sj}u^r\otimes v^s
$$
holds in $K[u]/(u^n-1)\otimes_KK[v]/(v^n-1)$ for all choices of $a_i\in K$ thanks to the fact that $\sum_{j\in\ZZ/n\ZZ}\zeta_n^{(r-s)j}$ equals $n$ if $r=s$ and vanishes if $r\neq s$. Hence, the statement of the lemma holds. In particular, they have the same sets of $K$-points.
\end{proof}

If $\zeta_n\notin K$, then no connected $\mu_n$-torsor over $K$ is a $\underline{\ZZ/n\ZZ}$-torsor over $K$ and no connected $\underline{\ZZ/n\ZZ}$-torsor over $K$ is a $\mu_n$-torsor over $K$. Indeed, connected torsors are fields, and  field extensions $L/K$ that are torsors under $\mu_{n}$ are not Galois, while field extensions $L/K$ that are torsors under $\underline{\ZZ/n\ZZ}$ are always Galois extensions, as follows from the descriptions in Theorem \ref{thm:mu_n} and Theorem \ref{thm:Z/nZ}.

    We now spell out the difference between the classifying groups of $K$-forms and torsors for $\mu_{n,K}$ and $\underline{\ZZ/n\ZZ}$ over a field $K$ of characteristic $0$.
    By \cite[\S III.1.3]{serre-gal-coh}, the $K$-forms of $\mu_{n,K}$ are classified by the cohomology group $H^1(\Gal{K},
    \mathrm {Aut}(\overline K[t]/(t^n-1))
        )$, where 
    the automorphism group $\mathrm {Aut}(\overline K[t]/(t^n-1))$ (in the category of $\overline K$-algebras) 
    is the symmetric group $S_n$ on the set of $n$ elements, and the $\Gal{K}$-module structure is given by conjugation of automorphisms for the $\Gal{K}$-action on $\mu_{n}(\overline K)$ induced by the base change from $K$ to $\overline K$.
    The $K$-forms of $\underline{\ZZ/n\ZZ}$ are classified by the cohomology group $H^1(\Gal{K},
    \mathrm {Aut}(\overline K^n)
    )$, where the automorphism group $\mathrm {Aut}(\overline K^n)$ (in the category of $\overline K$-algebras)
    is the symmetric group $S_n$
    with $\Gal{K}$-module structure given by conjugation of automorphisms for the $\Gal{K}$-action on $\underline{\ZZ/n\ZZ}$ induced by the base change from $K$ to $\overline K$. Notice that in this second case the $\Gal{K}$-module structure on $S_n$ is trivial.

    Since $\mu_{n,\overline K}\cong\underline{\ZZ/n\ZZ}$ over $\overline K$, the two groups have the same set of $K$-forms. We identify $\ZZ/n\ZZ$ as the subgroup of $S_n$ induced by the group operation on $\ZZ/n\ZZ$, where an element $r\in\ZZ/n\ZZ$ is identified with the permutation
    \begin{equation}\label{eq:subgroup}
    r: \ZZ/n\ZZ \to \ZZ/n\ZZ, \quad i\mapsto r+i.
    \end{equation}

    We observe that the $\mu_{n,K}$-torsors are classified by the cohomology of the subgroup $\mu_{n}(\overline K)\cong\ZZ/n\ZZ < S_n$, as the $\Gal{K}$-module structure on $\ZZ/n\ZZ$ induced by 
    $\aut(\overline K[t]/(t^n-1))$
    coincides with the $\Gal{K}$-module structure on $\mu_n(\overline K)$ induced by the base change from $K$ to $\overline K$. Similarly, the $\underline{\ZZ/n\ZZ}$-torsors over $K$ are classified by the cohomology of the subgroup $\ZZ/n\ZZ < S_n$  with trivial $\Gal{K}$-module structure.

\begin{example}
    If $\zeta_n\notin K$, then some non-connected $\mu_{n,K}$-torsors are not $\underline{\ZZ/n\ZZ}$-torsors and vice versa. For example, $\mu_{n,K}$ is not a $\ZZ/n\ZZ$-torsor as the polynomial $t^n-1$ factors over $K$ as a product of irreducible polynomials of different degrees, where at least one has degree 1 and another one is the minimal polynomial of $\zeta_n$, which has degree $>1$.

    On the other hand, some non-connected $\mu_{n,K}$-torsors are $\underline{\ZZ/n\ZZ}$-torsors. For example, if $K=\mathbb Q(\zeta_8)$, $n=16$ and $a=-1$, the $K$-algebra $K_{-1}=K[t]/(t^{16}+1)$ is both a $\mu_{16}$-torsor and a $\underline{\ZZ/16\ZZ}$-torsor. Indeed, it is clearly a $\mu_{16}$-torsor by Theorem~\ref{thm:mu_n}. To show that it is a $\underline{\Z/16\Z}$-torsor, let $A_i=K[t]/(t^4-\zeta_8^i)$ for $1\leq i\leq 8$, and note that there are isomorphisms of $K$-algebras
    $$
    K[t]/(t^{16}+1)\cong A_1\times A_3\times A_5\times A_7 \cong A_1^4,
    $$
    where the first isomorphism is given by factoring the polynomial $t^{16}+1$ over $K$, and the second isomorphism is given by 
    $$
    (t_1,t_3,t_5,t_7)\mapsto (t_1, t_3^3, \zeta_8t_5, \zeta_8 t_7^3)
    $$
    with inverse
    $$
    (u_1,u_3,u_5,u_7)\mapsto (u_1, u_3^{11}, \zeta_8^{-1} u_5, \zeta_8^{-3} u_7^{11}).
    $$
    The $K$-algebra $A_1$ is a $\underline{\ZZ/4\ZZ}$-torsor as in Theorem \ref{thm:Z/nZ}. We show that  $A_1^4$ is a $\underline{\ZZ/16\ZZ}$-torsor. Consider the isomorphism
    $$
    \varphi: \prod_{i=1}^4\overline K^4 \to \prod_{i=1}^4\overline K[t]/(t^4-\zeta_8), \quad \sum_{i=1}^4\sum_{j\in\ZZ/4\ZZ} a_{i,j} e_{i,j} \mapsto \left(\frac 14\sum_{s=0}^{3}\sum_{j\in\ZZ/4\ZZ} a_{i,j} \zeta_4^{-js}\zeta_{32}^{-s} t^s\right)_{1\leq i\leq 4}
    $$
    obtained by combining the isomorphisms \eqref{eq:isom comparison} with \eqref{eq:isom_mu_n} for $r=0$ on each of the four components. Here for each $i\in\{1,\dots,4\}$, $e_{i,1},\dots,e_{i,4}$ denotes a fixed basis of $\overline K^4$.
    Then $\varphi$
    induces the cocycle
$$\widetilde\varphi: \Gal{K}\to\aut(\overline K^{16})=S_{16}, \quad \sigma\mapsto \varphi^{-1}\circ \sigma\circ\varphi\circ\sigma^{-1},$$ given by 
$$
\widetilde \varphi(\sigma): \prod_{i=1}^4\overline K^4\to\prod_{i=1}^4\overline K^4,\quad \sum_{i=1}^4\sum_{j\in\ZZ/4\ZZ}a_{i,j}e_{i,j} \mapsto \sum_{i=1}^4\sum_{j\in\ZZ/4\ZZ} a_{i,j}e_{i, j+v_\sigma}
$$
for $\sigma\in \Gal{K}$ and $v_\sigma\in\ZZ/4\ZZ$ such that $\sigma(\zeta_{32})=\zeta_4^{v_\sigma}\zeta_{32}$.
Using the exact sequence
$$
0\longrightarrow\ZZ/4\ZZ\stackrel{4\cdot}{\longrightarrow}\ZZ/16\ZZ\longrightarrow\ZZ/4\ZZ\longrightarrow 0
$$
and \eqref{eq:subgroup},
we conclude that the cocycle $\widetilde \varphi$ is the continuous homomorphism $$\Gal{K}\to\ZZ/16\ZZ, \quad \sigma\mapsto 4v_\sigma.$$
\end{example}

\section{Heights on classifying stacks}
\label{sec:heights}
As described in the previous sections, \'etale algebras with specified Galois groups are in bijection with points on certain stacks. Further, since Hermite's theorem \cite[\S III.2]{neukirch}
implies that the number of such number fields with bounded discriminant is finite, the discriminant can be thought of as a  {height function} satisfying the Northcott property on the stack $BG$. This idea has been fleshed out in much greater detail in recent work \cite{ESZB}, \cite{dardathesis}, \cite{darda-yasuda22}, building on previous work of Yasuda \cite{yasuda-motivic} and Yasuda--Wood \cite{yasuda-wood}.

In this section we collect a number of approaches to defining heights of points on $BG$. In \S \ref{subsec:discriminant}, we compute the discriminant of the \'etale algebras \(K[t]/(t^n-a)\). In \S\S \ref{subsec:eszb-height}, \ref{subsec:darda-height} and \ref{subsec:darda-yasuda-heights}, we describe heights on \(BG\) following the framework of \cite{ESZB}, \cite{dardathesis} and \cite{darda-yasuda22} respectively.

\subsection{Discriminants of the points on $B\mu_n$}
\label{subsec:discriminant}

For $a\in K^\times$, we compute the discriminant $\Delta(K_a/K)$ of the \'etale algebras $K_a=K[t]/(t^n-a)$ following \cite[\S9]{dardathesis}.
\begin{proposition}
\label{prop:discriminant-Bmun}
    Let $K$ be a field such that $n$ is invertible in $K$. Let $a\in K^\times$, and $K_a=K[t]/(t^n-a)$.
    Then there is a positive constant $C_{K,n}$ independent of $a$ such that 
    $$\Delta(K_a/K)=\prod_{v\nmid n, v\mid a}\mathfrak p_v^{n-d_v}\prod_{v\mid n}\mathfrak p_v^{e(a,v)},$$
where $\mathfrak p_v$ is the prime ideal of $\OO_K$ corresponding to the finite place $v$, $d_v=\gcd(v(a),n)$, and $0\leq e(a,v)\leq C_{K,n}$.

If $K=\QQ$ and $a\in\ZZ$ is coprime to $n$, then $\Delta(K_a/K)=\prod_{p\mid a} p^{n-\gcd(v_p(a),n)}\prod_{p\mid n}p^{e(a,p)}$. If $a\in\ZZ$ is squarefree and coprime to $n$, then $K_a$ is a field and $\Delta(K_a/K)=a^{n-1}\prod_{p\mid n}p^{e(a,p)}$. 
\end{proposition}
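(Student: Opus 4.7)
The plan is to compute $v(\Delta(K_a/K))$ one finite place $v$ at a time, starting from the polynomial discriminant
$$\Disc(t^n-a) = (-1)^{n(n-1)/2}\,\Nm_{K_a/K}(n\alpha^{n-1}) = \pm\,n^n a^{n-1},$$
where $\alpha$ denotes the image of $t$ in $K_a$. Since $\Delta(K_a/K)$ divides this ideal, its support lies in the places dividing $na$. For a place $v \nmid na$, the polynomial $t^n-a$ has invertible discriminant modulo $\mathfrak{p}_v$, so $\OO_{K,v}[t]/(t^n-a)$ is already étale over $\OO_{K,v}$ and $v(\Delta(K_a/K)) = 0$.

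The heart of the proof is the tame ramified case $v \nmid n$, $v \mid a$. Base-change to the completion and decompose $K_v \otimes_K K_a = \prod_w (K_a)_w$ into local fields. Write $v(a) = d_v k$ with $d_v = \gcd(v(a),n)$, $m = n/d_v$, and $\gcd(k,m) = 1$; the Newton polygon of $t^n-a$ over $K_v$ is a single segment of slope $v(a)/n = k/m$, so every root has valuation $k/m$ and each ramification index $e_w$ is a multiple of $m$. For the matching upper bound I would factor each local extension through $(K_a)_w = K_v(\alpha_w) \supseteq K_v(\alpha_w^m) \supseteq K_v$: setting $\beta = \alpha_w^m$ and $\gamma = \beta/\pi_v^k$, one gets $\gamma^{d_v} = u$ where $u := a/\pi_v^{v(a)}$ is a unit, so $K_v(\alpha_w^m)/K_v$ is unramified (as $v \nmid d_v$), while $K_v(\alpha_w)/K_v(\alpha_w^m)$ is totally tamely ramified of degree exactly $m$ because $\beta$ has valuation $k$ coprime to $m$. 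This forces $e_w = m$ for every $w$ and, with $\sum_w e_w f_w = n = md_v$, yields $\sum_w f_w = d_v$. Since $v \nmid n$ the ramification is tame, and the classical tame discriminant formula gives
$$v(\Delta(K_a/K)) = \sum_w (e_w-1)f_w = n - \sum_w f_w = n - d_v.$$

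For $v \mid n$ the ramification can be wild and no clean formula is available, but boundedness is automatic: substituting $t \mapsto ct$ shows that the étale $K_v$-algebra $K_v[t]/(t^n-a)$ depends only on the class of $a$ in $K_v^\times/(K_v^\times)^n$, and this group is finite because $K_v^\times \cong \ZZ \oplus \OO_{K,v}^\times$ with $\OO_{K,v}^\times/(\OO_{K,v}^\times)^n$ finite. Hence only finitely many local discriminant exponents can occur as $a$ varies, uniformly bounded by some $C_{K,n}$. The two specialisations to $K = \QQ$ then drop out of the main formula: if $a \in \ZZ$ is coprime to $n$, the first product is indexed precisely by the primes dividing $a$; if moreover $a$ is squarefree, then $v_p(a) \in \{0,1\}$ forces $d_p = 1$ whenever $p \mid a$, so $\prod_{p \mid a} p^{n-d_p} = a^{n-1}$, and irreducibility of $t^n-a$ (making $K_a$ a field) follows from the criterion recorded in the remark after Theorem~\ref{thm:mu_n}. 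The main obstacle is the $v \nmid n$, $v \mid a$ case: the Newton polygon alone only yields the divisibility $m \mid e_w$, and the actual content lies in producing the matching upper bound $e_w \leq m$ via the tower $K_v \subseteq K_v(\alpha_w^m) \subseteq K_v(\alpha_w)$ described above.
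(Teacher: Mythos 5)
Your proof is correct and reaches the paper's formula, but by a genuinely more self-contained route. For the key computation at the tame places $v\nmid n$, $v\mid a$, the paper simply cites Darda's thesis (its Lemma 9.1.2.2 gives the per-factor local discriminant $\pi^{[K_v(a^{1/n}):K_v](1-d_v/n)}\OO_v$, and Proposition 9.1.2.3 reassembles this into $\pi^{n-d_v}\OO_v$ for the full \'etale algebra); you supply the content of that citation by hand. Your Newton polygon of $t^n-a$ over $K_v$ has a single segment of slope $k/m$, giving $m\mid e_w$ for every $w$, and the tower $K_v\subseteq K_v(\alpha_w^m)\subseteq K_v(\alpha_w)$ --- with the bottom unramified because $\gamma^{d_v}=u$ is a separable unit equation with $v\nmid d_v$, and the top totally tamely ramified of degree exactly $m$ because $v(\beta)=k$ is coprime to $m$ --- pins down $e_w=m$, so $\sum_w f_w=d_v$ and the tame conductor-discriminant formula yields $n-d_v$. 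That tower argument is precisely the missing upper bound that the Newton polygon alone does not give, and you are right to flag it as the crux. For $v\mid n$, the paper quotes Neukirch's general bound on local discriminant exponents in terms of degree and residue characteristic; your alternative --- that the isomorphism class of $K_v[t]/(t^n-a)$, and hence its discriminant, depends only on $[a]$ in the finite group $K_v^\times/(K_v^\times)^n$ --- is cleaner and exploits the specific shape of the polynomial. The local-to-global assembly (the paper cites Neukirch, Corollary III.2.11) is left implicit in your write-up but is standard. Overall the paper offloads the heart of the argument to references while you carry it out directly; both are valid, and yours can be read without opening the thesis.
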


Examples of complete computations of the discriminant can be found in  \cite[Proposition 2.7]{MR0718674} for  cyclotomic fields, and in \cite[Lemma 3]{Daberkow-Pohst} for the case where $n$ is prime and $\zeta_n\in K$. 

\begin{proof}
Let $v$ be a finite place of $K$ and $K_v$ the completion of $K$ at $v$ with ring of integers $\OO_v$ and uniformizer $\pi$. 
If $v\nmid n$,
$$\Delta(K_v(a^{\frac 1n})/K_v)
=\pi^{[K_v(a^{\frac 1n}):K_v](1-\frac{d_v}n)}\OO_v.$$
by \cite[Lemma 9.1.2.2]{dardathesis}, and as in \cite[Proposition 9.1.2.3]{dardathesis} we get $\Delta(K_v[t]/(t^n-a)/K_v)=\pi^{n-d_v}\OO_v$.
If $v\mid n$, then $v(\Delta(K_v(a^{\frac 1n})/K_v))$ is bounded in terms of $[K_v(a^{\frac 1n}):K_v]\leq n$ and $v$ (cf.~\cite[Theorem III.2.6]{neukirch}), and hence $e(a,n)=v(\Delta(K_v[t]/(t^n-a)/K_v))$ is bounded in terms of $[K_v(a^{\frac 1n}):K_v]\leq n$ and $v$.
By \cite[Corollary III.2.11]{neukirch} we get $\Delta(K_a/K)=\prod_{v\mid a, v\nmid n}\mathfrak p_v^{n-d_v}\prod_{v\mid n}\mathfrak p_v^{e(a,v)}$
where $\mathfrak p_v$ is the prime ideal of $\OO_K$ corresponding to the finite place $v$.

If $K=\QQ$ and $a\in\ZZ$ is squarefree and coprime to $n$, then $K_a$ is a field and $\Delta(K_a/K)=a^{n-1}\prod_{p\mid n}p^{e(a,p)}$. If $a\in\ZZ$ is coprime to $n$, then $\Delta(K_a/K)=\prod_{p\mid a} p^{n-\gcd(v_p(a),n)}\prod_{p\mid n}p^{e(a,p)}$. 
\end{proof}

\subsection{Heights on \(BG\)}

In this subsection we describe the two approaches taken in \cite{ESZB} and \cite{dardathesis} to talk about heights on \(BG\). 

\subsubsection{Heights with respect to a vector bundle}
\label{subsec:eszb-height}
If \(V\) is a projective variety over \(K\) with a very ample line bundle \(L\), one can define the Weil height with respect to \(L\)
by extending \(V\) and \(L\) to a model \(\ol{V}\) over \(\spec \OO_K\), pulling \(L\) back to a point \(\spec \OO_K \to \ol{V}\) and computing its ``degree''. 
See for instance \cite[Part B]{HindrySilverman}.
Such a height is geometric in nature, in that the asymptotic growth
of the number of points of bounded height reflect the geometry of \(V\). 
See for instance \cite{Batyrev-Manin}.
In \cite{ESZB}, the authors generalize this idea to stacks by defining heights with respect to vector bundles on (proper, Artin) stacks\footnote{We do not define these technical notions in this article. The stacks we are interested in do fall into this category. For definitions, we refer the reader to \cite{Olsson-Book} or \cite{alpernotes}.}. This approach unifies counting rational points on varieties and counting number fields under the same umbrella. In particular, on classifying stacks, the height in \cite{ESZB} specializes to some well known invariants used to count number fields, such as the discriminant.

\begin{definition}
    \label{definition:vector-bundle-quotient-stack}
    Let \(S\) be an affine scheme and \(G\) a 
    {smooth, 
    finite}
    group scheme over \(S\). A vector bundle on \([S/G]\) is the data of a vector bundle \(V\) on \(S\) along with a \(G\)-action that is {compatible}
    with the action of \(G\) on \(S\). If \(S = \spec K\) for a field \(K\) and \(G\) is constant, then a vector bundle on \(BG\) is a finite dimensional \(G\)-representation over \(K\). More generally, if \(G\) is a constant group scheme, and the action of \(G\) on \(S\) is trivial, 
     then a vector bundle on \([S/G] = BG_S\) 
    {corresponds to}
    a family of \(G\)-representations indexed by \(S\). 
\end{definition}
\begin{remark}
    There is a more general definition of vector bundles on stacks, which specializes to Definition \ref{definition:vector-bundle-quotient-stack} for quotient stacks. For the general definition, we refer the reader to \cite[\S 6.1]{alpernotes}, and for a proof that quasicoherent sheaves on the \'etale site of \(BG_S\) correspond to \(\OO_S\)-modules with left \(G\)-action, we refer the reader to \cite[Example 9.1.19]{Olsson-Book}.
\end{remark}

Let $\mathcal{X}$ be a nice\footnote{By a ``nice'' stack will mean a normal, proper, Artin stack with finite diagonal. The reader may just keep \(\mathcal{X} = BG\) in mind for the rest of this article.}
stack over the ring of integers $\mathcal{O}_K$ of a number field $K$, equipped with a vector bundle $\mathcal{V}$. A $K$-point of $\mathcal{X}$, $x : \spec K \to \mathcal{X}$ doesn't necessarily extend to an $\OO_K$ point of $\mathcal{X}$ due to failure of the valuative criterion of properness for stacks. However, it does extend to a $\mathcal{C}$-point $\bar x: \mathcal C \to \mathcal X$, where $\mathcal{C}$ is a stack over $\OO_K$ that is in some sense ``minimally ramified'' over $\OO_K$. This is called a tuning stack (\cite[\S 2.1]{ESZB}) and is birational to $\spec \OO_K$ (in fact, \(\mathcal{C} \to \spec \OO_K\) is a coarse space map, \cite[Definition 11.1.1]{Olsson-Book}).
More precisely, a tuning stack fits into a diagram of the form
\begin{equation}
\begin{tikzcd}
\spec K  \arrow[r] \arrow[rr, "x", bend left=30] \arrow[dr, hook]
& \mathcal{C} \arrow[d, "\pi"] \arrow[r, "\overline{x}"] & \mathcal{X}  \\
& \spec \OO_K & 
\end{tikzcd}
\label{fig:tuning-stack}
\end{equation}
where \(\pi\) is a birational coarse space map.
The authors of \cite{ESZB} use this diagram to define the height of \(x\) as follows. 

\begin{definition}[{\cite[Definition 2.11]{ESZB}}]
\label{defn:eszb-height}
    Let $\mathcal{V}$ be a vector bundle on $\mathcal{X}$. Then the height of a point
    $x : \spec K \to \mathcal{X}$ with respect to the vector bundle $\mathcal{V}$ is defined as:
    \[
     \height_{\mathcal{V}}(x) = - \deg(\pi_{*}\overline{x}^{*} (\mathcal{V}^{\vee})),
    \]
    where \(\deg\) refers to the Arakelov degree of the  determinant of $\mathcal V$ equipped with any metrization as in \cite[\S A3-A4]{ESZB}. The height is independent of the choice of the tuning stack $\mathcal C$.
\end{definition} 
\begin{remark}
\label{rmk:arakelov-degree}
 Definition \ref{defn:eszb-height} also extends to the case when \(K\) is the global function field of a smooth, proper curve \(C\) with \(\spec \OO_K\) replaced by \(C\). In that case, the degree in Definition \ref{defn:eszb-height} is the usual notion of the the degree of a divisor on the curve \(C\). 
    It is only when \(K\) is a number field that one needs to account for the infinite place and therefore consider the  {Arakelov degree}. For a quick introduction to Arakelov theory, we refer the reader to \cite{Arakelov74}. 
\end{remark}

The height in Definition \ref{defn:eszb-height} satisfies the Northcott property for appropriate choices of \(\mathcal{X}\) and \(\mathcal{V}\). In this article we only work with heights on \(BG\) which, as we will see in Propositions \ref{proposition:ESZB-discriminant} and \ref{prop:discriminant-Bmun}, agree with the discriminant at all but finitely many places, and in particular, satisfy the Northcott property.

\begin{remark}
\label{rmk:constant group}
 Let \(G\) be a finite group. 
 Let  \(\rho: G \hookrightarrow S_n\) be a degree \(n\) permutation representation of $G$. If \(x \in BG(K)\) is a connected torsor, then since \(G_K\) is a constant group scheme, via the identification \(BG(K) \cong H^1(\Gal{K}), G) \cong \Hom_{cts}(\Gal{K}, G) \),  \(x\) corresponds to a surjection \(x:\Gal{K} \to G\).
 When composed with \(\rho : G \to S_n\), this gives a homomorphism \(\rho_x : \Gal{K} \to S_n\). The composition thus defines an \'etale \(G\)-algebra \(\tilde{L} = (\ol{K})^{\ker \rho_x}\), which in particular is a Galois extension of $K$.
 Let \(H \subset S_n\) be the stabilizer of \(1 \in \{1, 2, \ldots , n \}\) in \(\rho(G)\).
 Then the fixed set \(\tilde{L}^{\rho^{-1}(H)}\) is an intermediate extension \(L/K\) with Galois closure \(\tilde{L}\). If the torsor
 \(x \in BG(K)\) is not connected, then the induced homomorphism \(\Gal{K} \to G\) may not be surjective. In this case, one can still generalize this construction by taking  \(\tilde{L} = \prod_{r \in \coker(x)}(\ol{K})^{\ker \rho_x}\) and \(L\) to be  the corresponding
 product of fixed fields of 1-point stabilizer subgroups. In what follows, we take \(L\) to be this \'etale \(G\)-algebra.
\end{remark}

\begin{proposition}[{\cite[Proposition 3.1]{ESZB}}]
\label{proposition:ESZB-discriminant}
    Let \(G\) be a finite group, and \(\underline{G}_{\OO_K}\) the associated constant group scheme over \(\spec \OO_K \).
    Let $\mathcal{V}$ be the vector bundle on $BG_{\spec \OO_K}$ corresponding to a degree $n$ permutation representation \(\rho: G \hookrightarrow S_n\) (equipped with the trivial metric described in \cite[\S A.2]{ESZB}). Let $x \in BG(K)$. The composition \(\Gal{K} \xrightarrow{x} G \xrightarrow{\rho} S_n\) defines a degree \(n\) \'etale algebra \(L/K\) as in Remark \ref{rmk:constant group}.
    Then
    \[
    \height_{\mathcal{V}}(x) = \frac{1}{2}\log|\Delta(L/K)|,
    \]
    where $|\Delta(L/K)|$ denotes the absolute norm of the discriminant of the extension \(L/K\). 
    \end{proposition}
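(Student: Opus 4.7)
The plan is to compute $-\deg(\pi_*\bar x^*\mathcal V^\vee)$ by picking a natural tuning stack, identifying the pushforward explicitly with $\OO_L$ as an $\OO_K$-module, and then invoking the classical Arakelov computation of $\deg(\OO_L)$ in terms of the discriminant of $L/K$.

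First I would construct the tuning stack. The point $x \in BG(K)$ corresponds to the \'etale $G$-algebra $\tilde L/K$ of Remark \ref{rmk:constant group}, and its $G$-action extends to the integral closure $\OO_{\tilde L}$ of $\OO_K$ in $\tilde L$. Take
\[
\mathcal C := [\spec \OO_{\tilde L}/G],
\]
with $\bar x : \mathcal C \to BG_{\OO_K}$ classifying the tautological $G$-torsor $\spec \OO_{\tilde L} \to \mathcal C$, and $\pi : \mathcal C \to \spec \OO_K = \spec(\OO_{\tilde L})^G$ the coarse moduli map. Since $G$ acts freely on $\spec \tilde L$, the map $\pi$ is an isomorphism over $\spec K$, so $\mathcal C$ is a tuning stack for $x$ fitting into diagram \eqref{fig:tuning-stack}.

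Next I would identify $\pi_*\bar x^*\mathcal V^\vee$ explicitly. The bundle $\mathcal V$ corresponds to $\OO_K^n$ with standard basis permuted by $\rho$, so $\bar x^*\mathcal V$ is the $G$-equivariant locally free $\OO_{\tilde L}$-module $\OO_{\tilde L}^n$ with diagonal action (Galois on scalars, $\rho$ on basis). Pushforward along a coarse space map of this type is the $G$-invariants functor. Letting $H \leq G$ be the stabilizer (under $\rho$) of $1\in\{1,\dots,n\}$, the permutation representation is self-dual and isomorphic to $\mathrm{Ind}_H^G\mathbf 1$, so Frobenius reciprocity gives
\[
\pi_*\bar x^*\mathcal V^\vee \;\cong\; (\OO_{\tilde L}\otimes_{\OO_K}\mathrm{Ind}_H^G\mathbf 1)^G \;\cong\; \OO_{\tilde L}^H \;=\; \OO_L.
\]
For a $\rho$ with several orbits one decomposes into transitive constituents and recovers the product $\OO_L = \prod_i \OO_{L_i}$ described in Remark \ref{rmk:constant group}; the non-connected case ($x$ non-surjective) is handled by interpreting $\OO_{\tilde L}$ as a product of rings of integers of Galois extensions, and the identification persists.

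Finally I would compute the Arakelov degree. The trivial metric on $\mathcal V$ from \cite[\S A.2]{ESZB} makes the standard basis of $\OO_K^n$ orthonormal at each archimedean place, and transporting it through the isomorphism above gives the canonical Minkowski-type metric on $\OO_L$ coming from the embeddings $L \hookrightarrow L\otimes_\QQ\RR$. The classical Arakelov formula for the ring of integers then yields
\[
\deg(\OO_L) \;=\; -\tfrac 12 \log|\Delta(L/K)|,
\]
and substituting into Definition \ref{defn:eszb-height} gives $\height_{\mathcal V}(x) = \tfrac 12 \log|\Delta(L/K)|$.

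The most delicate step is the metric compatibility at the end: one must verify that the trivial orthonormal metric on $\mathcal V$, pulled through $(\OO_{\tilde L}^n)^G\cong\OO_L$, really agrees with the Minkowski metric that enters the classical formula $\deg(\OO_L) = -\tfrac 12\log|\Delta(L/K)|$. The choice of tuning stack and the identification of the pushforward via Frobenius reciprocity are essentially formal; it is this archimedean bookkeeping which fixes the precise factor of $\tfrac 12$.
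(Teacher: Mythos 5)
Your route is genuinely different from the paper's and is mostly right in outline, but the final step is a real gap, not just ``bookkeeping''.

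What you do differently: you compute $\deg(\pi_*\bar x^*\mathcal V^\vee)$ directly by identifying the pushforward with $\OO_L$ (via the tensor identity and Frobenius reciprocity for $\mathrm{Ind}_H^G\mathbf 1$) and then invoking the classical Arakelov degree of a ring of integers. The paper instead uses the ESZB decomposition $\height_{\mathcal V}(x) = -\deg(\bar x^*\mathcal V^\vee) + \sum_{v\text{ finite}}\locdiscrep$, observes that $\bar x^*\mathcal V^\vee$ is the \emph{trivial} bundle on the tuning stack so $\deg(\bar x^*\mathcal V^\vee)=0$ (killing the archimedean contribution entirely), and then evaluates each $\locdiscrep$ by matching the $G$-invariant submodule of $p^*\bar x^*\mathcal V$ with the Yasuda--Wood tuning module and applying their local conductor formula. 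Your identification of the pushforward with $\OO_L$ is actually cleaner and more conceptual than the paper's explicit coset computation with the $\sigma_i$'s, and it captures the same content.

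The gap: you assert that the trivial metric on $\mathcal V$, transported through $(\OO_{\tilde L}^n)^G\cong\OO_L$, gives a metric for which $\deg(\OO_L)=-\tfrac12\log|\Delta(L/K)|$, and characterize the verification as what ``fixes the precise factor of $\tfrac12$.'' That framing is misleading. With the standard Lebesgue normalization on $L\otimes_\QQ\RR\cong\RR^{r_1}\times\CC^{r_2}$ one has $\mathrm{covol}(\OO_L)=2^{-r_2}\sqrt{|\Delta_L|}$, so a naive reading of the ``classical Arakelov formula'' gives $\deg(\OO_L)=r_2\log 2-\tfrac12\log|\Delta_L|$. The error term $r_2\log 2$ depends on $L$ (it varies with the splitting of archimedean places) and is \emph{not} a harmless normalization constant: if it were present the proposition, which asserts an exact equality, would be false. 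So the archimedean verification is not deferrable; it is the whole remaining content. You would need to show either that the ESZB metrization convention for the $L^2$ pushforward already produces the Minkowski-type metric for which $\mathrm{covol}=\sqrt{|\Delta_L|}$ (i.e.\ with the factor-of-$\sqrt2$ correction at complex places), or argue as the paper does that the archimedean contribution is forced to vanish because $\bar x^*\mathcal V^\vee$ is trivial on $\mathcal C$, reducing everything to finite places.
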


In \cite{ESZB}, the authors prove Proposition \ref{proposition:ESZB-discriminant} by using a ``local decomposition'' of the height, and observing that the local factors are related to the Artin conductor of a certain representation, as in \cite[\S 4]{yasuda-wood}. We briefly outline the way to go from the setting in \cite{ESZB} to \cite{yasuda-wood}.

\begin{proof} 
    Let \(x \in BG(K)\) be a \(G\)-torsor over \(K\) and \(\rho : G \to S_n\) as in the statement of the proposition. One can extend \(\rho\) to a map \(G \to \GL_n(\OO_K)\) via permutation matrices. We will denote this by \(\rho\) as well.
 Let $V:=\mathcal{O}_K^{\oplus n}$ equipped with the trivial metric, let \(\mathcal{V} := ( V, \rho)\)
    be the vector bundle on \(BG_{\spec \OO_K}\) corresponding to \(\rho\) as in  Definition \ref{definition:vector-bundle-quotient-stack}, and let \(L\) and \(\tilde{L}\) be as in Remark \ref{rmk:constant group}.

    In the setup of 
    {\eqref{fig:tuning-stack},}
    let $\mathcal{X} = BG_{\spec \OO_K}$ and take $\mathcal{C} = [\spec \OO_{\tilde{L}}/G]$. 
    As a consequence, we may extend 
    {\eqref{fig:tuning-stack}}
    to the following  commutative diagram,
\begin{equation}    
    \centering
    \begin{tikzcd}
&\spec \OO_{\tilde{L}} \arrow[d, "p"] \arrow[r, "\overline{x}_{\tilde{L}}"] & \spec \OO_K \arrow[d, "q"]
\\ 
\spec K \arrow[r] &\mathcal{C} \arrow[r, "\overline{x}"] \arrow[d, "\pi"] & BG_{\spec \OO_K}\\
&\spec \OO_K&
\end{tikzcd}
    \label{fig:tuning-module-completed}
\end{equation}
where $p,q$ are quotient maps. In particular, $q$ is the quotient by the trivial action of $G$.
Note that \(V = q^* \mathcal{V}\) and for every place \(v\) of \(K\), we get a local version of this diagram.

By \cite[Definition 2.21]{ESZB}, the discussion following it and by \cite[Proposition B.10]{ESZB}, the height of \(x\) with respect to \(\mathcal{V}\) satisfies
    \[
    \height_{\mathcal{V}}(x) = -\deg\left(\pi_* \ol{x}^*(\mathcal{V}^{\vee}) \right) = - \deg\left(\ol{x}^*\mathcal{V}^{\vee}\right) + \sum_{\substack{v \in M_{K}\\ \text{finite}}} \locdiscrep,
    \]
    where the local discrepancies \(\locdiscrep\), 
    which essentially measure the difference between the vector bundle on the tuning stack and its pushforward to \(\spec \OO_K\),
    can be computed as 
    \[
    \locdiscrep = \frac{1}{\#G} \log \left| \frac{p^*\ol{x}^* \mathcal{V}^{\vee}_v  }{ \pi_* \left(\ol{x}^{*} \mathcal{V}_v^{\vee}\right) \otimes_{\OO_{K_v}} \OO_{\tilde{L}_v} } \right|.
    \]
    Here \(\tilde{L}_{v}\) denotes the \'etale algebra \(\prod_{w|v} \tilde{L}_w\). 
     Further, since \(V = q^*\mathcal{V}\) is the trivial vector bundle, \(\deg(\ol{x}^*\mathcal{V}^{\vee}) = 0\). Therefore, the height decomposes into a product of local factors, and we may work locally. For notational convenience, we drop the localizing subscripts from now on: we replace \(K_v, L_v\) and \(\tilde{L}_v\) by \(K, L \) and \(\tilde{L}\) in what follows.

Write \(L = K(\alpha)\) and let \(\alpha_1 := \alpha, \alpha_2, \ldots, \alpha_n\) be  the Galois conjugates of \(\alpha\). Note that by the way \(L\) is defined, the permutation action of \(G\) on the \(\alpha_i\)'s is determined by the embedding \(\rho\), and vice-versa. Denote by \(\sigma_i \in G\) an element that takes \(\alpha_1 \mapsto \alpha_i\). If \(\sigma_i\) and \(\sigma_i'\) are two choices sending \(\alpha_1 \mapsto \alpha_i\), then \(\sigma_i^{-1}\sigma_i'\) stabilizes \(\alpha_1\) and therefore belongs to \(\Stab_{\rho(G)}(1)\). In particular, if \(\beta \in L\), then \(\sigma_i(\beta) = \sigma_i'(\beta)\).

Since each \(\sigma_i\) fixes \(K\), 
\[
V = \OO_K^{\oplus n} \cong \oplus_{i=1}^n \sigma_i (\OO_K) 
\]
as \(\OO_K\) modules. Similarly, by the commutativity of the diagram \eqref{fig:tuning-module-completed}, we have the following isomorphisms of  \(\OO_{\tilde{L}}\) modules.
\[
p^*\overline{x}^*\mathcal{V} = \overline{x}_{\tilde{L}}^{*}V \cong V \otimes_{\OO_K} \OO_{\tilde{L}} \cong \OO_{\tilde{L}}^{\oplus n} \cong \oplus_{i=1}^n \sigma_i(\OO_{\tilde{L}}),
\]
where the  \(\OO_{\tilde{L}}\)-module structure on $\oplus_{i=1}^n \sigma_i(\OO_{\tilde{L}})$ is given by  \(x \cdot (y_1, y_2, \ldots y_n) = (\sigma_1(x)y_1, \sigma_2(x)y_2, \ldots \sigma_n(x)y_n )\).

Let \(g \in G\). If \(g(\alpha_i) = \alpha_j\) (that is, \(\rho(g)(i) = j\)), then \(g^{-1} \sigma_j(\alpha_1) = g^{-1}(\alpha_j) = \alpha_i = \sigma_i(\alpha_1)\).  Therefore, \(g^{-1}\sigma_j\) and \(\sigma_i\) belong to the same coset of \(\Stab_{\rho(G)}(1)\). Thus we have a \(G\) action on \(p^* \ol{x}^* \mathcal{V} = \mathcal{V} \otimes \OO_{\tilde{L}}\) given by
\[
g \cdot (\sigma_1(x_1), \sigma_2(x_2) \ldots , \sigma_n(x_n)) = \left(g^{-1}\sigma_{\rho(g)(1)}(x_1), \ldots, g^{-1}\sigma_{\rho(g)(n)}(x_n) \right),
\]
whose invariants are given by tuples \( (\sigma_1(x_1), \sigma_2(x_2) \ldots , \sigma_n(x_n)) \) such that \(\sigma_i(x_i) = g^{-1}\sigma_{\rho(g)(i)}(x_i)\) for each \(i\), that is, \(x_i \in L\). In other words, the \(G\)-invariant submodule of \(p^* \ol{x}^* \mathcal{V}\) is precisely the  {tuning module} as described in the proof of \cite[Theorem 4.8]{yasuda-wood}.

Now, since \(p^*\ol{x}^*\mathcal{V}\) is the pullback of a vector bundle along the quotient map \(p\), it is \(G\)-equivariant by definition. Thus, by descent theory (see for instance \cite[\S 4.4]{vistoli-fag}):
\[
\left(p_* p^* \ol{x}^* \mathcal{V}\right)^G \cong \ol{x}^*\mathcal{V}. 
\]
Taking \(\pi_*\) on both sides tells us that \(\pi_*\ol{x}^*\mathcal{V}\) is the tuning module considered as an \(\OO_K\) module. We may now apply the proof of \cite[Theorem 4.8]{yasuda-wood} to conclude that \(\locdiscrep = \frac{1}{2} v(\Delta(L/K))\log(q_v)\), where \(q_v\) is the size of the residue field at \(v\).
\end{proof}

Next, we  consider the case of \(B\mu_n\) and recall the approach of
\cite[\S 3.2]{ESZB} for computing heights on \(B\mu_n\). While the height in the case of the constant group scheme \(\ZZ/n\ZZ\) falls into the purview of the previous section, i.e., the height is exactly the discriminant, the height in the non-constant case may differ from the discriminant at finitely many places. In particular, this height is a  {quasi-discriminant} in the sense of \cite{dardathesis}. We will explore quasi-discriminants in a later section. 

\begin{theorem}
\label{thm:ESZB-height-Bmu-n}
    Let \(\chi: \mu_n \to \GG_m \) be a primitive character. Let \( \mathcal{V} = \left( \OO_K^{\oplus n} , \oplus_{a=0}^{n-1} \chi^a\right)\) be the corresponding rank \(n\) vector bundle {on $B\mu_{n,\OO_K}$}. Let \(x \in B\mu_n(K)\) correspond to an \'etale algebra \(L\).
        Assume that \(L/K\) does not contain any intermediate cyclotomic extensions, i.e., \(L\) and \(K(\zeta_n)\) are linearly disjoint over \(K\). Then, there is a constant \(C\) depending only on \(n\) and \(K\), and \(c_x\in\mathbb R\) satisfying \(|c_x| < C\) such that
\[
\height_{\mathcal{V}}(x) = c_x + \frac{1}{2}\log|\Delta(L/K)|. 
\]
\end{theorem}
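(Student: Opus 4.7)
The plan is to decompose the height locally as in the proof of Proposition \ref{proposition:ESZB-discriminant} and reduce each local computation to the constant group scheme case by base changing to $K(\zeta_n)$, under which $\mu_n$ becomes $\underline{\ZZ/n\ZZ}$ via Lemma \ref{rmk:mu_n isom Z/nZ}. The crucial observation is that, under this identification, $\bigoplus_{a=0}^{n-1}\chi^a$ is precisely the regular representation of $\underline{\ZZ/n\ZZ}$: the powers of a primitive character exhaust the characters of the cyclic group $\mu_n(\overline K)$, and the sum of all characters of a finite abelian group is its regular representation, which as a permutation representation corresponds to left multiplication.

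Following the recipe used in the proof of Proposition \ref{proposition:ESZB-discriminant}, I would write
\[
\height_{\mathcal{V}}(x) = -\deg\bigl(\overline{x}^*\mathcal{V}^{\vee}\bigr) + \sum_{v \text{ finite}}\delta_{\mathcal{V};v}(x)
\]
and compare term by term with the analogous decomposition $\tfrac12\log|\Delta(L/K)| = \tfrac12 \sum_v v(\Delta(L/K))\log q_v$ afforded by Proposition \ref{prop:discriminant-Bmun}. The global Arakelov term depends only on $n$, on $K$, and on the chosen metric on $\det \mathcal{V}$, and is independent of $x$; it contributes to $c_x$. For each finite place $v \nmid n$, the extension $K_v(\zeta_n)/K_v$ is unramified, so $\mu_n$ becomes constant after an unramified base change. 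The linear disjointness of $L$ and $K(\zeta_n)$ ensures that the base changed torsor $x_{K_v(\zeta_n)}$ corresponds to the étale algebra $L \otimes_K K_v(\zeta_n)$ of the expected dimension; the tuning stack $[\spec \OO_{L_v(\zeta_n)}/\underline{\ZZ/n\ZZ}]$ then computes the local discrepancy by the regular permutation representation argument of Proposition \ref{proposition:ESZB-discriminant}, and the unramified descent to $K_v$ preserves the local discriminant exponent. This yields $\delta_{\mathcal{V};v}(x) = \tfrac12\, v(\Delta(L/K))\log q_v$ at every such place.

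At the finitely many places $v \mid n$, Proposition \ref{prop:discriminant-Bmun} bounds $v(\Delta(L/K))$ by a constant depending only on $n$ and $K_v$. On the other hand, $\delta_{\mathcal{V};v}(x)$ is a logarithm of an index of $\OO_{K_v}$-lattices of full rank $n$ sitting between the pushforward of the tuning module and the pullback of $\mathcal{V}$; this index is controlled by the same bounded ramification data for $\mu_n$-torsors at places of residue characteristic dividing $n$ (as used in the proof of Proposition \ref{prop:discriminant-Bmun}), hence is also uniformly bounded in $x$. Setting
\[
c_x := \height_{\mathcal{V}}(x) - \tfrac12 \log|\Delta(L/K)|,
\]
the preceding analysis shows that $c_x$ is a sum of a bounded global contribution and finitely many bounded local discrepancies at $v \mid n$, all controlled by constants depending only on $n$ and $K$, giving the claimed bound $|c_x| < C$.

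The main obstacle I expect is the unramified descent step at places $v \nmid n$: one must verify rigorously that the local tuning module formalism adapted from \cite{yasuda-wood}, which Proposition \ref{proposition:ESZB-discriminant} applies to permutation representations of constant group schemes, transfers through the unramified base change $K_v \to K_v(\zeta_n)$ for the group scheme $\mu_n$ with representation $\bigoplus_a \chi^a$. The key compatibility to check is that the rank-$n$ vector bundle $\mathcal{V}$ becomes, after base change to $\OO_{K_v(\zeta_n)}$, identified with the permutation bundle coming from the regular representation of $\underline{\ZZ/n\ZZ}_{\OO_{K_v(\zeta_n)}}$ in a way compatible with the $\mu_n$-action on the tuning stack $[\spec \OO_{L_v}/\mu_n]$. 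The linear disjointness hypothesis enters precisely here, ensuring that the étale algebra $L$ does not collapse upon adjoining $\zeta_n$ and hence that the local tuning data behaves as in the constant-group case.
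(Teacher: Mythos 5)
Your overall strategy coincides with the paper's: both base change to $K(\zeta_n)$, observe via Lemma \ref{rmk:mu_n isom Z/nZ} that $\oplus_a\chi^a$ becomes the regular permutation representation of $\underline{\ZZ/n\ZZ}$, and then appeal to Proposition \ref{proposition:ESZB-discriminant}. The difference is in mechanism. The paper runs the argument globally: it base changes the entire point $x$ to a point $y$ over $K(\zeta_n)$, applies \cite[Proposition 2.24]{ESZB} (which relates $\height_{\mathcal{V}_n}(y)$ to $\height_{\mathcal{V}}(x)$ and says that local discrepancies at unramified places scale by $[K(\zeta_n):K]$ after base change), bounds the finitely many discrepancies at $v\mid n$ by \cite[Proposition 2.25]{ESZB}, and then closes the loop with the norm identity $\Nm_{K(\zeta_n)/K}(\Delta(L(\zeta_n)/K(\zeta_n))) = \Delta(L/K)^{[K(\zeta_n):K]}$. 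You instead carry out a place-by-place comparison, claiming an exact identity $\delta_{\mathcal{V};v}(x) = \tfrac12 v(\Delta(L/K))\log q_v$ at every $v\nmid n$; this is a sharper statement and, if established, would give a cleaner characterization of $c_x$.

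The gap you flag at the end is real, and worth spelling out: you need to know that the local discrepancy $\delta_{\mathcal{V};v}(x)$ over $K_v$ can be computed after the unramified base change to $K_v(\zeta_n)$ where the group becomes constant and the tuning module formalism of \cite{yasuda-wood} applies. This is precisely the content of \cite[Proposition 2.24]{ESZB} at the unramified places, so the gap is fillable by citing that result — indeed the paper leans on it rather than re-deriving the local identity by hand. Two smaller points you glossed over but which the paper handles explicitly: (i) the linear disjointness hypothesis is what guarantees $L(\zeta_n) = L\otimes_K K(\zeta_n)$ is the étale algebra of the base-changed torsor of the expected rank, which you correctly invoke but do not prove; and (ii) the Archimedean term $-\deg(\overline{x}^*\mathcal{V}^\vee)$ vanishes (not merely bounded) because $\mathcal{V}$ pulls back to the trivial metrized bundle on $\spec\OO_K$ — this is used in both arguments but you only assert it depends on the metric. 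With \cite[Prop.\ 2.24, 2.25]{ESZB} as inputs, your version of the proof goes through and is essentially equivalent to the paper's.
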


\begin{proof}
    We claim that over \(K(\zeta_n)\), the representation
\begin{equation}
\label{eq:representation mu_n}
\oplus_{a=0}^{n-1} \chi^{a} : \mu_n \to \oplus_{a=0}^{n-1} \GG_m \hookrightarrow \GL_n(\OO_K)
\end{equation}
is isomorphic to a permutation representation of \(\ZZ/n\ZZ\) acting on itself. Indeed, if \(\chi\) sends \(\zeta_n \mapsto \zeta_n^k\) for some positive integer \(k\) with \(\gcd(k,n)=1\), then the composition \eqref{eq:representation mu_n}
sends 
\[
\zeta_n \mapsto (\zeta_n^0, \zeta_n^k, \zeta_n^{2k}  \ldots ,\zeta_n^{(n-1)k} ) \mapsto [e_1, e_k, e_{2k}, \ldots, e_{(n-1)k} ],
\]
where \(e_i\) is the standard \(i\)-th basis vector, where we take \(i\) modulo \(n\). 
This is a permutation matrix of order \(n\). By Lemma \ref{rmk:mu_n isom Z/nZ}, the group schemes \(\underline{\ZZ/n\ZZ}\) and \(\mu_n\) are isomorphic over \(K(\zeta_n)\). Further, the action described above is the same as \(\ZZ/n\ZZ\) acting on itself by 
{$a\cdot b=ka+b$.}
In particular, over \(K(\zeta_n)\) this becomes a permutation representation associated to a constant group scheme, thus falling into the setting of Proposition \ref{proposition:ESZB-discriminant}. Let \(y\) denote the base change of $x$ to $K(\zeta_n)$ and let $L(\zeta_n)$ denote the corresponding \'etale algebra.

By Proposition \ref{proposition:ESZB-discriminant},
\[
\height_{\mathcal{V}_{K(\zeta_n)}}(y) = \frac{1}{2}\log|\Delta(L(\zeta_n)/K(\zeta_n))|.
\]
Let \(\mathcal{S}\) denote the set of primes ramified in \(K(\zeta_n)/K\). By \cite[Corollary III.2.12]{neukirch}, it is precisely the set of
primes dividing \(n\). By the theory of local discrepancies in \cite[\S 2]{ESZB} and the fact that \(\mathcal{V}\) is the trivial vector bundle on \(\spec \OO_K\), one can decompose the height on \((B\mu_n)_K\) as \[\height_{\mathcal{V}}(x) = \sum_{v \in \mathcal{S}} \delta_{\mathcal{V}, v}(x) + \sum_{v \notin \mathcal{S}} \delta_{\mathcal{V}; v}(x). \]
Let \(\mathcal{V}_n\) denote the pullback of \(\mathcal V\) 
to \(B\mu_{n,K(\zeta_n)}\). Then by \cite[Proposition 2.24]{ESZB}, over \(K(\zeta_n)\)
\begin{align*}
    \height_{\mathcal{V}_n}(y) &=  \sum_{w|v, v \in \mathcal{S}} \delta_{\mathcal{V}_n; w}(y) + [K(\zeta_n):K]\sum_{v \notin \mathcal{S}} \delta_{\mathcal{V}; v}(x)\\
    &= \sum_{w|v, v \in \mathcal{S}} \delta_{\mathcal{V}_n; w}(y) + [K(\zeta_n):K] \left(\height_{\mathcal{V}}(x) - \sum_{v \in \mathcal{S}} \delta_{\mathcal{V}, v}(x) \right).
\end{align*}
Since \(\mathcal S\) is a fixed finite set depending only on \(n\) and \(K\), by \cite[Proposition 2.25]{ESZB} there is a constant \(C\) depending only on \(n\) and \(K\), such that
\[
\height_{\mathcal{V}}(x) + c_x= \frac{1}{[K(\zeta_n):K]}\height_{\mathcal{V}_n}(y) = \frac{1}{2[K(\zeta_n):K]}\log|\Delta(L(\zeta_n)/K(\zeta_n))|,
\]
and \(|c_x| < C.\) Further, by properties of discriminants in towers and compositums \cite[\S III.2]{neukirch},
\[\Nm_{K(\zeta_n)/K}(\Delta(L(\zeta_n)/K(\zeta_n))) = \Delta(L/K)^{[K(\zeta_n):K]},\]
{where $\Nm_{K(\zeta_n)/K}$ denotes the norm.}
We see that \(\height_{\mathcal{V}}(x)\) is equal to \(\frac{1}{2} \log |\Delta(L/K)|\) upto a constant that is bounded on \(B\mu_n(K)\).
\end{proof}

\begin{remark}
In \cite[\S 3.2]{ESZB}, the authors describe the height on \(B\mu_{n,\QQ}\) with respect to the line bundle \(\chi\) itself (as opposed to the vector bundle used in Theorem \ref{thm:ESZB-height-Bmu-n} above). Suppose for simplicity that \(\chi\) comes from the standard representation, i.e. \(\chi(\zeta_n) = \zeta_n \). By considering
\(B\mu_n\) as a zero-dimensional weighted projective stack,\footnote{See \cite[Chapter 3]{dardathesis} for a detailed definition of weighted projective stacks.}
the authors show that if \(a \in \QQ^*/(\QQ^*)^n\) is an \(n\)-th power free integer, then the height of the corresponding \(\mu_n\)-torsor over \(\QQ\) is \(\log|a|^{1/n}\) up to a bounded function.
While this is not directly relevant to this article, it gives a nice description of that height in terms of the torsor, and moreover shows ways in which one can use sums and powers of \(\chi\) to construct
various height functions 
on \(B\mu_n\), 
i.e., number field counting invariants.
\end{remark}

\subsubsection{Darda's height}
\label{subsec:darda-height}

In this subsection, we briefly describe the height defined in \cite{dardathesis}. In his work, Darda defines the notion of a  {quasi-toric} height on a weighted projective stack. 
He then specializes the height to the case of \(B\mu_n\), viewing it as the \(0\)-dimensional weighted projective stack $\PP(n)$.

As before, let $K$ be a number field and let $v$ denote a place of $K$. Let $r$ denote the smallest prime factor of $n$.
\begin{definition}[{\cite[Lemma 9.1.3.1]{dardathesis}}]
\label{defn:darda-local-factors}
    For every $n\in\mathbb N$ and every place $v$ of $K$, we define a
function $\fdiscn_{v}: K_{v}^{\times} \to \R_{\ge 0} $ as follows:
\[
\fdiscn_{v}(a) = \begin{cases}
    |a|_v^{1/n} \left|\left(\Delta((K_{v}[t]/(t^n - a)) / K_{v})\right)^{1/(n^2-n^2/r)} \right| & \text{ if $v$ finite,}\\
    |a|_v^{1/n} & \text{ otherwise,}
\end{cases}
\]
where $|\cdot|_v=q_v^{-v(\cdot)}$ denotes the $v$-adic absolute value if $v$ is finite, the real absolute value if $v$ is real, and the square of the complex absolute value if $v$ is complex, and \(\mid \cdot \mid\) denotes the absolute ideal norm. 
\end{definition}

\begin{lemma}
\label{lemma:darda-discriminant}
    For $x \in B\mu_n(K)$, define the discriminant height as
    \[
    H^{\Delta}(x) := \prod_{v \in M_K} \fdiscn_{v}(x).
    \]
    Then $H^{\Delta}(x)^{n^2 - n^2/r}$ is precisely the discriminant of the torsor \(K_a\) corresponding to $x$.
\end{lemma}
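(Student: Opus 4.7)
The plan is to unfold Definition~\ref{defn:darda-local-factors} and reduce the statement to two classical identities: the product formula for absolute values, and the compatibility of the discriminant with completions. By Theorem~\ref{thm:mu_n}, the point $x \in B\mu_n(K)$ corresponds to a class $[a] \in K^\times/(K^\times)^n$; fix such a representative $a \in K^\times$, so that the associated torsor (up to the inversion convention in Theorem~\ref{thm:mu_n}, which affects neither $\fdiscn_v$ nor the discriminant, since both depend only on the class $[a]$ modulo $n$-th powers) is $K_a = K[t]/(t^n-a)$.

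Raising $H^\Delta(x)$ to the power $n^2 - n^2/r$ and substituting Definition~\ref{defn:darda-local-factors}, I would split the resulting product into the $|a|_v^{1/n}$ pieces and the discriminant pieces. Noting that $(n^2 - n^2/r)/n = n - n/r$, this gives
\[
H^\Delta(x)^{n^2 - n^2/r} = \left(\prod_{v \in M_K} |a|_v^{\,n - n/r}\right) \cdot \prod_{\substack{v \in M_K \\ v \text{ finite}}} \bigl|\Delta(K_v[t]/(t^n - a)/K_v)\bigr|,
\]
where the second product runs only over finite places because the archimedean factor of $\fdiscn_v$ carries no discriminant contribution.

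Next I would eliminate each factor separately. For the first, the product formula $\prod_{v \in M_K} |a|_v = 1$ (valid for $a \in K^\times$ in the normalization of Definition~\ref{defn:darda-local-factors}) gives $\prod_v |a|_v^{\,n - n/r} = 1$. For the second, I would invoke the compatibility of relative discriminants with completion (see, e.g., \cite[Corollary III.2.11]{neukirch}): the absolute ideal norm of the global discriminant $\Delta(K_a/K)$ equals the product over finite places of the absolute norms of the local discriminants of $K_v \otimes_K K_a \cong K_v[t]/(t^n - a)$. Combining these identities yields $H^\Delta(x)^{n^2 - n^2/r} = |\Delta(K_a/K)|$.

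The proof is essentially bookkeeping; no step should present a serious obstacle. The only care is in matching the normalization of absolute values (so that the product formula kills the $|a|_v$ contribution exactly) and in observing that Darda's exponent $n^2 - n^2/r$ has been calibrated precisely so that the remaining discriminant contribution appears to the first power.
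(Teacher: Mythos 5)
Your proof is correct and follows the same route as the paper's (much terser) argument: unfold the local factors, invoke the product formula to eliminate the $|a|_v^{n-n/r}$ contribution, and use the compatibility of the relative discriminant with completion (Neukirch, Cor.\ III.2.11) to reassemble the local discriminant norms into $|\Delta(K_a/K)|$. Your aside on the inversion convention from Theorem~\ref{thm:mu_n} is a worthwhile clarification that the paper leaves implicit.
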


\begin{proof}
    Let \(a \in K^{\times}\) be an element such that \(K_a\) represents the class of \(x \in B\mu_n(K) \cong K^{\times}/(K^{\times})^n\).
    By the product formula, the multiplicativity of the ideal norm and the local decomposition of the discriminant, we have that $H^{\Delta}(x)$ is $| \left(\Delta((K[t]/(t^n - a)) / K)\right)^{1/(n^2-n^2/r)}|$.
\end{proof}

In his PhD thesis \cite{dardathesis}, 
{Darda defined quasi-discriminant heights as functions that agree with the discriminant height at all but finitely many places\footnote{More precisely, a quasi-discriminant height coming from a degree \(k\) family is a product of local functions \(f_v\) that agree with $\left(\fdiscn\right)^k$ at all but finitely many places. }. He shows that any quasi-discriminant height has the Northcott property, and gives a general framework to use height zeta functions for counting points of bounded quasi-discriminant height on weighted projective stacks.}

\subsubsection{The Darda--Yasuda perspective}
\label{subsec:darda-yasuda-heights}

In their work \cite{darda-yasuda22} on a stacky version of the Batyrev--Manin conjecture, 
Darda and Yasuda define the height of a point on a stack \(\mathcal{X}\) with respect to a ``raised line bundle'', which is the data of a line bundle on the coarse space of \(X\), along with a ``raising function'', that captures the stackiness of \(\mathcal{X}\). In this subsection, we briefly outline their definition of height on \(B\mu_n\) as in \cite{darda-yasuda22} and \cite{darda-yasuda-torsors1}, and describe its relation to the discriminant.

  \begin{definition}[{\cite[Definition 2.3]{darda-yasuda22}}]
        Let \(\mathcal{X}\) be a tame Deligne-Mumford stack.\footnote{See \cite[Proposition 2.1]{bresciani-vistoli} for the definition of a tame stack.} Define the stack of 0-jets as
        \[
        \JetsX = \bigsqcup_{l>0} \underline{\Hom}_K^{rep}\left( B{\mu_l}_K, \mathcal{X}\right),
        \]
        where \(\underline{\Hom}_K^{rep}\) denotes the stack of representable \(K\)-morphisms. We refer the reader to \cite[Definition 5.1.5]{Olsson-Book} for a definition of representability. However for this article, one may think of this as requiring that the induced maps on automorphism groups of the points be injections (see for instance \cite[Lemma 3.2.2 (b)]{Cesnavicius}).
       Henceforth, we will only keep track of the connected components of this stack, \(\pi_0(\JetsX)\). Following \cite{darda-yasuda22} we denote by \({\pi_0}^*(\JetsX)\) the set of {twisted sectors}, i.e., elements of \(\pi_0(\JetsX)\) coming from \(l>1\).
    \end{definition}
    
When \(\mathcal{X} = B\mu_n\), the disjoint union ranges over \(l|n\), and for each such \(l\), \({\pi_0}(\underline{\Hom}^{rep}_K (B\mu_{l, K}, B\mu_{n,K}))\) can be identified with the group of group homomorphisms \(\mu_l \to \mu_n\) that are isomorphisms onto their image. 
This group is isomorphic to \((\ZZ/l\ZZ)^{\times}\). Thus \(\pi_0(\Jets B\mu_n) \) can be identified with \(\ZZ/n\ZZ\). For more on this, see \cite[Example 2.15]{darda-yasuda22}. 

For \(B\mu_n\) over a number field \(K\), one may think of the set of twisted sectors as keeping track of the possible (tame) inertia groups of \(\mu_n\)-torsors over \(K\). In fact, this is true more generally for \(BG\), where \(G\) is a finite \'etale group scheme.

   \begin{definition}
   \label{defn:raising-function}
        A raising datum, which by abuse of notation we will call a  {raising function}, on \(BG(K)\) is a collection \((c_v)_{v \in M_K}\) of continuous functions 
        \[
        c_v \colon BG(K_v) \to \RR_{\ge 0}
        \]
        such that for all but finitely many \(v \in M_K\), the function \(c_v\) factors through  \( BG(K_v) \xrightarrow{res_v} {\pi_0}(\Jets BG ) \xrightarrow{c} \RR\), for some function \(c\) satisfying \(c(BG) = 0\). Here \(res_v\) is a residue map that takes a point \(x_v \in BG(K_v)\) to the induced map \(B\mu_l \to BG\) as in the discussion following \cite[Lemma 2.16]{darda-yasuda22}. 
          \end{definition}
Each residue map essentially assigns to a local \(G\) extension its (tame) inertia group. In particular, the maps \(res_{v}\) are only defined outside of a finite set of places \(v \in M_K\). While the definition of a raising function might appear nebulous at first glance, it performs a very natural function:  it assigns local values at \(v\) to an \'etale algebra, depending on the tame inertia groups above the place \(v\). 

Definition \ref{defn:raising-function} is related to {counting functions} in \cite{wood-probabilities} and class functions in \cite{EllenbergVenkatesh2005}.  
 
\begin{definition}
    Maintaining the notation above, the height corresponding to a raising function $c=(c_v)_{v \in M_K}$ is defined as \[\Height_c(x) = \prod_{v \in M_K}q_v^{c_v(x_v)},\]
    where \(q_v\) is the size of the residue field at \(v\) if \(v\) is finite, and 
    {Euler's number}
    \(e\) if it is infinite.
\end{definition}
In particular, on \(BG\) for a constant group scheme \(G\), {by taking the local factor \(c_v\) to be $v$-adic valuation of the local discriminant at the place \(v\), 
one obtains a raising function}
\(c\) for which \(\Height_c(x)\) is the discriminant of the corresponding \'etale algebra. 
{See for instance \cite[\S 2.5.3]{darda-yasuda-torsors1}}.

\begin{remark}
    For a more general stack \(\mathcal{X}\), the height defined in \cite{darda-yasuda22} has a component coming from the coarse space of \(\mathcal{X}\). However, in the case of \(BG_{K}\), the coarse space is just a point, so we do not see this contribution.
\end{remark}

\begin{remark}\label{DYframe}
One of the advantages of the Darda--Yasuda framework is that it allows for height functions that do not necessarily arise from a vector bundle. For instance, \cite[Example 4.11]{darda-yasuda22} defines a raising function on \(B\ZZ/p\ZZ\) whose associated height cannot agree with the height with respect to  {any} vector bundle in the sense of \cite{ESZB}. On the other hand, the Darda--Yasuda framework currently only works for tame stacks, while the Ellenberg--Satriano--Zureick-Brown framework works for wild stacks as well. For general stacks, the work in \cite{landesman2023stacky} illustrates that there are heights studied widely that do not fit into either framework: it is shown that the Faltings height on the moduli stack of elliptic curves in characteristic three does not come from a vector bundle in the sense of \cite{ESZB}, but this stack is 
not tame
and so does not fit into the Darda--Yasuda framework.
\end{remark}

\section{Malle's conjecture}
\label{sec:malles-conjecture}
In this section, we recall Malle's conjecture in \S \ref{subsec:classical-malle} and we compare it with the recent stacky conjectures of \cite{ESZB} in \S \ref{subsec:eszb-conjecture} and \cite{darda-yasuda22} in \S \ref{subsec:darda-yasuda-conjecture}.

\subsection{Classical Malle's conjecture} 
\label{subsec:classical-malle}

In \cite{malle1} and \cite{malle2}, Malle formulated a conjecture on the order of growth of the number of number fields of degree \(n\) over a given base number field $K$ with given Galois group $G \hookrightarrow S_n$ ordered by discriminant. According to this conjecture
the order of growth is expected to depend on two constants associated to the group $G$, denoted by $a(G)$ and $b(K,G)$ and defined below. 

We view $G$ as a subgroup of the permutation group $S_n$ such that $G$ acts transitively on $\{1, \ldots, n\}$. For $g \in G$, let 
\[
n(g) := \textup{number of orbits under the action of } g \textup{ on } \{1, \ldots, n\}.
\]
Note that $n(g)$ depends only on the conjugacy class of $g$. The number $n-n(g)$ is called the \textit{index} of $g$. Define
\[
a(G)=(\min\{n-n(g):g\in G, g\neq 1\})^{-1}.
\]
Note that \(a(G)\) depends on the embedding \(G \hookrightarrow S_n\), and is equal to 1 if and only if the image of \(G\) contains a transposition. 

The constant $b(K,G)$ may depend also on the base number field $K$. Let $\Gamma = \gal(\Qbar/\QQ)$ denote the absolute Galois group of $\QQ$. Let $\chi: \Gamma \to \hat{\ZZ}^{\times}$ denote the cyclotomic character, i.e., for every \(\sigma \in \Gamma\) and $m\in\NN$, define \(\chi(\sigma)\) by \(\sigma \cdot\zeta_m = \zeta_m^{\chi(\sigma) \text{ mod }m}\) for any primitive \(m\)-th root of unity \(\zeta_m\). Since for any \(m\), \(\chi(\sigma)\) is invertible modulo \(m\), the image of $\chi$ does indeed land inside \(\hat{\ZZ}^{\times}\). Define the action of $\Gamma$ on the set $\Cl(G)$ of conjugacy classes of $G$ as follows.
For $\sigma \in \Gamma$ and $g \in G$, set $\sigma \cdot g := g^{\chi(\sigma)}$. This gives an action on \(\Cl(G)\) and further, \(g\) and \(g^{\chi(\sigma)}\) have the same index. 
The subgroup $\Gamma_K = \gal(\Qbar/K) \leq \Gamma$ thus also acts on \(\Cl(G)\), and we can define
\[
b(K,G)=\#\left(\{[g] \in \Cl(G): n-n(g) = a(G)^{-1}\}/\Gamma_K\right),
\]
the number of $\Gamma_K$-orbits of conjugacy classes of $G$ with minimal index. In \cite[Lemma 2.2]{malle2}, Malle showed that if \(a(G)\) is 1, then \(b(K,G)\) is also 1 for all number fields $K$. The dependence of the constant \(b(K,G)\) on the field \(K\) highlights the effect of having roots of unity in the base field \(K\). If \(\zeta_m \in K\), then for \( \sigma \in \Gamma_K\), \(\chi(\sigma) \equiv 1 \mod m\). So if \(m|\#G\), then \(\Gamma_K\) fixes more elements and thus \(b(K,G)\) is smaller. For a more precise explanation, see \cite[Example 2.1]{malle2} or Example \ref{example:malle-constants-zmodn} below.

\begin{conjecture}[Weak Malle's conjecture {\cite{malle1}}]
Let $K$ be a number field with algebraic closure $\overline K$. Let $G$ be a subgroup of the  group $S_n$ of permutations of $n$ elements such that $G$ acts transitively on $\{1,\dots,n\}$. Let $M(K,G;B)$ be the number of finite field extensions $L/K$ of degree $n$ contained in $\overline K$ such that the Galois closure of $L$ has Galois group $G$ and the norm of the discriminant of $L/K$ is bounded above by $B>0$. Then
    for all \(\varepsilon>0\), there exist positive real constants \(c_1(K,G)\) and \(c_2(K,G, \varepsilon)\) such that 
\begin{equation}
    \label{eq:weak-malle}
    c_1(K,G) B^{a(G)} \le M(K,G;B) \le B^{a(G) + \varepsilon}.
\end{equation}
\end{conjecture}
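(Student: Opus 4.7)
The plan is to treat the two inequalities in \eqref{eq:weak-malle} by different methods, with the upper bound being the principal obstacle; it indeed remains open in general. For the lower bound, I would fix a non-identity element $g \in G$ achieving the minimal index $m := a(G)^{-1} = n - n(g)$ and produce many $G$-extensions whose ramification is driven by $[g]$. A finite place $v$ of $K$ whose inertia in $L/K$ is a conjugate of $\langle g\rangle$ contributes a factor of $q_v^m$ to $|\Delta(L/K)|$, up to a bounded correction when $v \mid \#G$. So one needs to realize, for each admissible set $S$ of finite places with $\prod_{v \in S} q_v^m \le B$, a distinct $G$-extension with the prescribed local inertia, and then count such $S$. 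The prime ideal theorem produces $\gg B^{a(G)}$ admissible sets, while the local-to-global realization can be arranged in many cases: class field theory for $G$ abelian, the work of Kl\"uners--Malle for $G$ solvable, and rigidity combined with Hilbert irreducibility for many non-solvable $G$.

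For the upper bound, the natural approach is to parametrize degree-$n$ extensions by lattice points and apply geometry of numbers: embed $\OO_L$ in $\RR^{n[K:\QQ]}$ via its archimedean places, choose a Minkowski-reduced basis, count integral tuples whose associated discriminant is at most $B$, and divide by the size of the appropriate $\GL_n(\OO_K)$-orbits. Pursued directly this yields only Schmidt's bound $M(K,G;B) \ll B^{(n+2)/4}$, which matches $B^{a(G)+\varepsilon}$ only for very small $n$. A more refined strategy, due to Ellenberg--Venkatesh, counts surjections $\Gal{K} \twoheadrightarrow G$ of bounded Artin conductor by inducting along a composition series of $G$; this reduces the upper bound in \eqref{eq:weak-malle} to a discriminant multiplicity conjecture of the form $\#\{L/K : |\Delta(L/K)| = D\} \ll_\varepsilon D^\varepsilon$.

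The entire difficulty lies in the upper bound: no known technique closes the gap from $B^{(n+2)/4}$ down to $B^{a(G)+\varepsilon}$ in general. The two outstanding inputs are (i) controlling the contribution of primes dividing $\#G$, where ramification can be wild and the relation between inertia and discriminant exponent is delicate, and (ii) bounding the multiplicity of any single discriminant value, itself a major open problem. The stacky reformulations of \cite{ESZB} and \cite{darda-yasuda22} discussed later in the paper are partly motivated by the hope that recasting Malle's conjecture as a height-counting problem on $BG$ will suggest new geometric routes of attack.
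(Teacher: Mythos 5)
This statement is a \emph{conjecture}, not a theorem, and the paper does not attempt to prove it --- it is recorded precisely because the question is open. You have correctly recognized this: your ``proposal'' is not a proof but an honest survey of partial results and obstructions, which is the right response to being asked to prove an open conjecture. The content of your survey is accurate in outline. The lower bound strategy (choose $g$ of minimal index, engineer many $G$-extensions with inertia generated by conjugates of $g$ at a flexible set of tame places, then count such sets via Chebotarev) is indeed how lower bounds of the predicted order are established when they are known; Malle himself gives essentially this argument for the lower bound in the abelian and some solvable cases. You are also right that the upper bound is where all the difficulty lives, that Schmidt's geometry-of-numbers bound $B^{(n+2)/4}$ is far from the target in general, that the Ellenberg--Venkatesh induction reduces matters to a discriminant-multiplicity statement, and that wild ramification at $v \mid \#G$ is a genuine complication. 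One small caveat worth adding: even the lower bound is not unconditionally known for arbitrary $G$, since realizing prescribed local conditions globally requires solving an inverse Galois/Grunwald-type problem for $G$, which is open in general; phrasing your first paragraph as ``I would fix $g$\ldots and produce many $G$-extensions'' slightly understates that this step is itself conjectural outside the abelian, nilpotent, and certain rigid or solvable cases. With that qualifier, your assessment matches the state of the art and matches why the paper records the statement as a conjecture rather than proving it.
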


A stronger version of the conjecture, stated in \cite{malle2}, asserts the following:
\begin{conjecture}[Strong Malle's conjecture \cite{malle2}]
\label{conjecture:strong-malle}
Under the same assumptions,
\begin{equation}
\label{eq:strong-malle}
    M(K,G;B) \sim c(K,G) B^{a(G)}(\log B)^{b(K,G)-1},
\end{equation}
for a positive constant $c(K,G)$.
\end{conjecture}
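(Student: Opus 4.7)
The strong Malle conjecture remains a major open problem in general, so what follows is a plan only for the (partially known) abelian cases most directly relevant to this paper, together with comments on the obstacles beyond that.

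Using the identifications in Theorems 2.11 and 2.14, the count $M(K,G;B)$ for $G$ abelian translates into a count of continuous surjective homomorphisms $\varphi : \Gal{K} \to G$ weighted by the absolute discriminant of the associated \'etale algebra. For $G$ cyclic, these factor through $\Gal{K}^{\mathrm{ab}}$, so by global class field theory they correspond bijectively to continuous characters of the idele class group $C_K$ of the prescribed order. Proposition 3.1 (or the discriminant--conductor formula for Dirichlet characters) expresses $|\Delta(L_\chi/K)|$ as a product of local factors determined entirely by the ramification types of $\chi$ at each place.

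The next step is to package the count into a Dirichlet series
\[
Z(s) = \sum_{\chi} |\Delta(L_\chi/K)|^{-s},
\]
where the sum is over characters $\chi$ of $C_K$ whose image is exactly $G$. Inclusion--exclusion over subgroups of $G$ reduces this to sums over characters landing in fixed subgroups, each of which factors as an Euler product. The local Euler factors are finite generating functions in $q_v^{-s}$ weighted, over conjugacy classes $[g]$ arising as inertia, by exponents $n-n(g)$; the minimum value $(n - n(g))^{-1} = a(G)$ governs the dominant pole of each factor. The key analytic step is to show that $Z(s)$ admits meromorphic continuation slightly past $s = a(G)$ with a pole there of order exactly $b(K,G)$; the role of the cyclotomic character in the definition of $b(K,G)$ enters through the splitting behavior of the local factors at places where $K$ contains various roots of unity. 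A Tauberian theorem of Delange or Wirsing type then produces the asymptotic $c(K,G) B^{a(G)} (\log B)^{b(K,G)-1}$ with $c(K,G)$ the leading residue.

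The principal difficulty already in this abelian case is computing $c(K,G)$ explicitly and controlling lower-order terms coming from places dividing $\#G$, where ramification is wild and the local generating functions are not determined by tame invariants; this is precisely where the bounded but not pinned-down error $e(a,v)$ of Proposition 3.2 becomes unavoidable. The hardest part, and the reason the conjecture is open in general, is the non-abelian case: there is no class field theoretic parametrization of $G$-extensions, so there is no direct analogue of $Z(s)$. Current approaches either reduce to abelian extensions via solvable towers (as in Kl\"uners--Malle and work of Alberts), use geometry of numbers on representations of small groups (Bhargava and collaborators), or attempt to read off the exponent $a(G)$ and the logarithmic factor from stacky heights as in \cite{ESZB} and \cite{darda-yasuda22}; extracting the precise constant $c(K,G)$ in these frameworks remains out of reach, and even the exponent $b(K,G)-1$ is known to require correction in some non-abelian cases (Kl\"uners' counterexample for $G = C_3 \wr C_2$).
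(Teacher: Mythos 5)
The statement you were asked to ``prove'' is a \emph{conjecture}, not a theorem: the paper states it (as Conjecture~\ref{conjecture:strong-malle}, attributed to Malle) and does not attempt to prove it --- indeed the paper goes on to explain that it is \emph{false} in general (Kl\"uners' counterexample $G = C_3 \wr C_2$). You correctly recognize this up front, which is the single most important thing to get right here, and your write-up is really a survey of the known partial results and proof strategies rather than a proof.

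As such a survey, your account is accurate and matches what the paper itself describes in \S\ref{subsec:classical-malle}. The class-field-theoretic route for abelian $G$ --- parametrize cyclic extensions by characters of the idele class group, assemble a Dirichlet series $Z(s)$ from the conductor-discriminant formula, use the Euler product and inclusion--exclusion over subgroups to locate the rightmost pole and its order, and finish with a Delange/Wirsing Tauberian theorem --- is precisely how Wright, M\"aki, and Darda prove the cases the paper cites, and your identification of the pole at $s = a(G)$ of order $b(K,G)$ with the tame local exponents $n - n(g)$ is the right mechanism. Two small inaccuracies worth fixing: the internal cross-references you give (``Theorems 2.11 and 2.14'', ``Proposition 3.1'') do not line up with the paper's actual numbering (the relevant statements are Theorem~\ref{thm:mu_n}, Theorem~\ref{thm:Z/nZ}, and Proposition~\ref{prop:discriminant-Bmun}), and you should be slightly more careful to distinguish counting surjections $\Gal{K} \twoheadrightarrow G$ (which is what $M(K,G;B)$ counts after passing to fields) from counting all homomorphisms (which corresponds to the torsor count $T(K,G;B)$ of Remark~\ref{rmk:counting-torsors-versus-malle}); the inclusion--exclusion step you mention is exactly what bridges the two, but it is worth saying so explicitly. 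Beyond that, your discussion of the obstacles --- wild ramification at $v \mid n$, the absence of a class-field-theoretic parametrization in the nonabelian case, and the need to correct $b(K,G)$ in light of Kl\"uners' counterexample --- tracks the paper's own commentary faithfully.
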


To our knowledge there is no general conjecture for the value of the leading constant \(c(K, G)\). A conjectural formula for \(c(K, S_n)\) has been given by Bhargava in \cite{bhargava-mass07}, which is compatible with the known values for $n\leq 5$. In the cases where $G$ is abelian and $G = \ZZ/n\ZZ$, the exact values of \(c(K, G)\) are computed  in \cite{wright} and \cite{dardathesis}, respectively. 

Several variations of Malle's conjecture have been studied in the last few decades. For instance, one may generalize Malle's conjecture to finite extensions of \(\FF_q(t)\) in the case when \(q\) is coprime to \(|G|\). In this case, the discriminant is replaced by \(q\) raised to the degree of the ramification divisor. See \cite{EllenbergVenkatesh2005} for a description of this problem in terms of counting \(\FF_q\) points on Hurwitz spaces, as well as a heuristic argument for why it must hold.
 One can formulate Malle-type conjectures for invariants other than the discriminant, such as the conductor or any  {counting function} (\cite{wood-probabilities}). The work in this area is vast and growing, but for the purpose of this article we will restrict our attention to discriminants.
In the counting function \(M(K,G;B)\), one may instead count  {isomorphism classes} of extensions. This would change the leading constant of the asymptotic formula by a factor \(\#\aut(G)\).
In \cite{malle2}, Malle formulates the weak conjecture also for number fields that are unramified outside a finite set \(S\). The \(a\)-invariant in this case is the same as before, and is independent of \(S\).

Before proceeding to what is known about this conjecture, we give some examples of what it says in some cases of interest in this article. 

\begin{example}[Regular representation]
    \label{example:malle-regular-rep}
    If $n=\#G$ and the inclusion $G\subseteq S_{n}$ is induced by the action of $G$ on itself by the group operation, then $M(K,G;x)$ is the number of degree $n$ Galois extensions of $K$ of Galois group $G$ and discriminant bounded by $x$. In this case, $a(G)=\frac{1}{n-n/r}$ where $r$ is the smallest prime number dividing $n$, i.e., the smallest order of a nonzero element of $G$.
\end{example}

\begin{example}[Cyclic groups]
\label{example:malle-constants-zmodn}
    If $G=\ZZ/n\ZZ$ with the inclusion in $S_n$ given by the group operation as in 
    \eqref{eq:subgroup}, then Malle's conjecture concerns Galois field extensions of degree $n$, i.e., connected $K$-points of $B(\underline{\ZZ/n\ZZ})$. As in Example \ref{example:malle-regular-rep}, $a(G)=\frac 1{n-n/r}$, where $r$ is the smallest prime number dividing $n$. To compute \(b(K,G)\), note that \(\ZZ/n\ZZ\) contains \(\phi(r)=r-1\) elements of order \(r\), all of which have the same index. If \(K\) contains all the \(r\)-th roots of unity, then all of these elements (viewed as conjugacy classes) are fixed by \(\Gamma_K\) and so \(b(K,G)=1\). If \(K=\QQ\), then \(b(K,G) = r-1\). A similar argument shows that in general, \(b(K,G) = (r-1)/[K(\zeta_r):K].\)
\end{example}

\begin{remark}
\label{rmk:counting-torsors-versus-malle}
Conjectures \ref{eq:weak-malle} and \ref{conjecture:strong-malle}  concern only \(K\)-rational points on \(BG_K\) that correspond to  {connected} torsors, where \(G\) is a  {constant} group scheme over \(\spec K\). One may instead set up a different counting function
    \begin{equation*}
        T(K,G;B) := \#\{x \in BG(K) \mid |\Delta(x)| < B \},
    \end{equation*}
    which for constant \(G\) would count the number of isomorphism classes of \'etale \(G\)-algebras of bounded absolute discriminant.  While the space of irreducible polynomials is Zariski dense in the space of all polynomials, it is generally not true that \(M(K,G;B)\) makes up \(100\%\) of \(T(K,G;B)\). This only happens in the case when \(G\) has prime exponent (see \cite[\S 2.1]{wood-probabilities}).\footnote{In the terminology of \cite{wood-probabilities}, the discriminant is a ``fair'' counting function only when \(G\) has prime exponent.}
\end{remark}

\subsubsection{Known results and counter-results}

The strong version of Malle's conjecture is known to be true in some cases and false in some others. Some of the earliest work in this area dates back to work of Davenport--Heilbronn \cite{MR491593} for \(G= S_3\), and M\"{a}ki \cite{Maki-thesis} for abelian \(G\) with the regular representation and \(K=\QQ\). In \cite{wright}, Wright proved Malle's conjecture for (Galois) abelian extensions over arbitrary number fields and for function fields with characteristic coprime to \(|G|\). In recent work, Darda \cite{dardathesis} proved the following result.
 \begin{theorem}[{\cite[Corollaries 9.2.5.9, 9.2.5.10]{dardathesis}}]
            Let $H$ be a quasi discriminant height coming from a degree $n$ family, i.e., one whose local factors differ from \((H^{\Delta})^n\) (Lemma \ref{lemma:darda-discriminant}) at finitely many places. 
            Let $r$ be the smallest prime dividing $n$. Then, the number of $K$ points of $B\mu_n$ with height bounded by $B$ grows asymptotically like
            \[
            C_{K,H,r} B (\log B)^{r-2},
         \]
            where $C_{K,H,r}>0$ is a positive constant. In particular, in light of Lemma \ref{lemma:darda-discriminant},
            \[
            T(K, \mu_n; B) \sim C_{K,H^{\Delta},r} B^{1/(n-n/r)} (\log B)^{r-2}.
             \]
        \end{theorem}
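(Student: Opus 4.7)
The plan is to interpret $H$ as a quasi-toric height on the $0$-dimensional weighted projective stack $B\mu_n=\mathbb{P}(n)$ and to extract the asymptotic from the analytic behavior of the associated height zeta function, following \cite[\S\S 3,9]{dardathesis}. By Theorem~\ref{thm:mu_n}, elements of $B\mu_n(K)$ correspond to classes $[a]\in K^\times/(K^\times)^n$; combining Lemma~\ref{lemma:darda-discriminant} with the degree-$n$-family hypothesis gives $H(a)^{n-n/r}=|\Delta(K_a/K)|\cdot E(a)$, where $E(a)$ is a bounded multiplicative correction factor supported at a finite set $S$ of ``bad'' places (those dividing $n$, the archimedean ones, and the finitely many where $H$ differs from $(H^\Delta)^n$). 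By Proposition~\ref{prop:discriminant-Bmun}, for $v\notin S$ the $v$-adic contribution to the discriminant is $\mathfrak p_v^{n-\gcd(v(a),n)}$, so the counting problem reduces, up to bounded multiplicative constants, to a weighted count of $n$-th-power-free fractional ideals of $K$ subject to a class-group constraint.

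The next step is to study the height zeta function $Z(s)=\sum_{x\in B\mu_n(K)}H(x)^{-s}$. Parametrizing classes via the exact sequence
\[
1 \to \mathcal O_K^\times/(\mathcal O_K^\times)^n \to K^\times/(K^\times)^n \to \{\text{frac.~ideals}\}/n\text{-th powers} \to \Cl_K/n\Cl_K \to 0,
\]
with Dirichlet's unit theorem controlling the first term and a finite Fourier decomposition on $\Cl_K/n\Cl_K$ handling the last, yields
\[
Z(s) \;=\; \sum_{\chi} C_\chi(s)\prod_{v\notin S} Z_{v,\chi}(s),
\]
a finite sum over characters of an associated finite abelian group, where $C_\chi(s)$ is holomorphic and bounded on $\Re s>1-\varepsilon$ (absorbing the contributions from $S$) and the local Euler factor for finite $v\notin S$ is
\[
Z_{v,\chi}(s) \;=\; 1 \,+\, \sum_{e=1}^{n-1}\chi_v(\pi_v^e)\,q_v^{-s(n-\gcd(e,n))/(n-n/r)}.
\]

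The dominant analytic behavior comes from $\chi=1$. For each prime $r'\mid n$ there are $\phi(r')=r'-1$ residues $e\in\{1,\dots,n-1\}$ with $\gcd(e,n)=n/r'$, and the corresponding exponent in $q_v$ is $-s(1-1/r')/(1-1/r)$; this equals $-s$ exactly when $r'=r$ (the smallest prime divisor of $n$) and is strictly more negative otherwise. Hence $Z_{v,1}(s)=1+(r-1)q_v^{-s}+O(q_v^{-s\gamma})$ for some $\gamma>1$ independent of $v$, and comparison with $\zeta_K(s)^{r-1}$ shows that $\prod_{v\notin S}Z_{v,1}(s)$ admits meromorphic continuation to $\Re s>1-\delta$ with a unique pole at $s=1$ of order $r-1$ and positive leading coefficient. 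For nontrivial $\chi$, orthogonality combined with the Chebotarev density theorem forces $\prod_v Z_{v,\chi}(s)$ to converge on $\Re s>1-\delta'$, contributing no pole at $s=1$. Applying a standard Tauberian theorem (Delange, or Ikehara--Landau as in \cite[\S 9]{dardathesis}) then yields $\#\{x\in B\mu_n(K):H(x)\le B\}\sim C_{K,H,r}\,B(\log B)^{r-2}$, and the statement for $T(K,\mu_n;B)$ follows by applying this to $(H^\Delta)^n$ and substituting $B\mapsto B^{1/(n-n/r)}$.

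The main obstacle will be the rigorous handling of the nontrivial characters $\chi$ and the bookkeeping of the bounded local factors at places in $S$: one must verify that neither produces additional poles at $s=1$ nor a vanishing leading coefficient, which requires the adelic/Tate-style analysis at the heart of Darda's framework. Once the Euler factorization above is secured, the pole-order computation and the Tauberian conclusion are routine Dirichlet-series manipulations.
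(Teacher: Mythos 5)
The paper does not prove this theorem; it is quoted verbatim from Darda's thesis \cite[Cor.~9.2.5.9, 9.2.5.10]{dardathesis}, so there is no "paper's own proof" to compare against. Your sketch reproduces, in broad strokes, the strategy of Darda's thesis (and Wright's earlier work on abelian extensions): realize \(B\mu_n\) as a $0$-dimensional weighted projective stack, express the height zeta function as a sum of twisted Euler products over characters, isolate the pole at $s=1$ of order $r-1$ from the trivial-character factor, and conclude via a Tauberian theorem. The local Euler factor computation (the $q_v^{-s}$ term appearing with multiplicity $\phi(r)=r-1$ from residues $e$ with $\gcd(e,n)=n/r$, the rest giving strictly faster decay) and the deduction of the $T(K,\mu_n;B)$ formula by the substitution $B\mapsto B^{1/(n-n/r)}$ are correct.

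One genuine difference in route: you propose an explicit "Wright-style" decomposition of $K^\times/(K^\times)^n$ via a four-term exact sequence involving $\mathcal O_K^\times/(\mathcal O_K^\times)^n$, fractional ideals modulo $n$-th powers, and $\Cl_K/n\Cl_K$, and then Fourier-decompose on the finite quotients. Darda instead uses the Batyrev--Tschinkel adelic formalism: the height zeta function is an integral over $K^\times\backslash\mathbb A_K^\times$, Poisson summation is applied, and the resulting sum runs over Hecke characters of the idele class group. The two are morally equivalent (the relevant ramified archimedean contributions and regulator factors are absorbed into the leading constant in both), but the adelic version handles the units, the class group, and the bad places in one uniform mechanism rather than by ad hoc bookkeeping. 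Your stated exact sequence also needs a small caveat: right-exactness of $-\otimes\mathbb Z/n$ gives exactness on the right but the left map need not be injective in general; this does not affect the count since only the cokernel data enter the Euler product, but it should be phrased as a four-term right-exact complex or repaired via the snake lemma. As you note yourself, the real labor is in controlling nontrivial characters (non-vanishing of the relevant $L$-functions at $s=1$) and the bounded corrections at $S$; neither threatens the pole order, and the Tauberian step is then routine.
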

When the base field contains all the \(n\)-th roots of unity, \(\mu_n\) and \(\ZZ/n\ZZ\) are isomorphic as group schemes and the powers of \(B\) and \(\log B\) in \cite{wright} and \cite{dardathesis} agree. In light of Remark \ref{rmk:counting-torsors-versus-malle}, note that the constants in front in general, do not.

Malle's conjecture has been proved in several non-abelian cases, including work of Bhargava \cite{bhargava-s4, bhargava-s5} and Bhargava--Shankar--Wang \cite{bsw2015geometryofnumbers} for \(S_4\) and \(S_5\), Wang \cite{wang2021} for certain compositums of fields, Cohen--Diaz-y-Diaz--Olivier \cite{MR1918290} for \(D_4\) extensions, among many others. Since it is impossible to do justice to the vast body of literature in this area within the scope of this article, we refer the reader to \cite{wang2021}, \cite{alberts21}, and \cite{wood-probabilities} for a comprehensive survey. The weak form of Malle's conjecture was proven by Kl\"{u}ners and Malle  \cite{kluners-malle} for nilpotent groups with the regular representation; the strong upper bound was proved by Kl\"{u}ners \cite{kluners-upper}, and a recent result of Koymans and Pagano \cite{MR4555793} proves the strong form of Malle's conjecture for a large class of nilpotent groups. In \cite{Alberts20}, Alberts proved a general upper bound in the case of solvable groups.

\begin{remark}
    Let \(K_a/K\) denote the \'etale algebra described in \S 2. Then \(\gal(K_a/K)\) is contained in some extension \(E\) of \(\ZZ/n\ZZ\) by \((\ZZ/n\ZZ)^{\times}\) and is in particular solvable. 
    There is a finite-to-finite relation between \'etale algebras of the form \(K_a\) and \'etale algebras with Galois group contained in \(E\). Further, there are finitely many possibilities for \(E\).
    Thus one may at the very least derive weak bounds on counts of \'etale algebras of the form \(K_a\) from the known results about solvable groups.
\end{remark}

\begin{remark}
If $K$ is a number field, we can assume that $a$ is an algebraic integer.    By Eisenstein's criterion, if the polynomial $t^n-a$ is reducible, then $a$ is a squareful number, that is, every prime appearing in the factorization of the ideal $(a)$ into prime ideals has exponent at least 2. The sharp criterion for irreducibility can be found in \cite[Theorem VI.9.1]{Lang-Algebra}. If $K=\QQ$, identifying $\QQ^\times/(\QQ^\times)^n$ with the set of $n$-th power free integers (positive if $n$ is odd),
the set of irreducible polynomials $t^n-a$ with constant term of size at most $B$
has growth $\gg B$, while the set of reducible polynomials $t^n-a$ with constant term of size at most $B$
grows at most as $O(B^{\frac 12})$. Hence, according to this measure, most $\QQ$-points of $B\mu_n$ correspond to \'etale algebras $K_a$ that are fields. 
\end{remark}

Of course, the elephant in the room for any discussion on Malle's conjecture is the fact that it is known to be false in some cases. In \cite{klunerscountereg}, Kl{\"u}ners showed that for \(G = C_3 \wr C_2 \hookrightarrow S_6\), the strong form of Malle's conjecture is incorrect. In this case, the conjecture predicts that \(M(\QQ, G; B) \asymp B^{1/2}\). However, Kl\"uners shows that one actually obtains \(B^{1/2} \log B\). In particular, only the weak form of the conjecture still holds.
He also points out that if one instead counts \(C_3 \wr C_2\) fields that do not contain intermediate cyclotomic extensions, one does indeed obtain the asymptotic predicted in Conjecture \ref{conjecture:strong-malle}.
Following this, T\"urkelli \cite{Turkelli} suggested a modified version of the conjecture over function fields \(\FF_q(t)\) in the case \(\gcd(q, |G|) = 1\). Containing intermediate cyclotomic extensions in the number field case is analogous 
to having non-trivial constant subextension in the function field case.
T\"urkelli's idea was to keep track of the constant subextensions of the extensions of function fields and modify the \(b\)-constant accordingly. See \cite[\SS 2, 5]{Turkelli}.
The counterexamples to Malle's conjecture are believed to stem from the existence of ``accumulating'' sets (akin to breaking thin sets in \cite{darda-yasuda22}): these are ``small'' sets that contribute more than expected to the counting function.

\subsubsection{Malle-type conjectures for torsors}
\label{subsec:classical-malle-torsors}
One useful perspective on counting number fields with a given Galois group (and indeed this is a perspective used in several papers mentioned above), is that this amounts to counting the number of continuous surjections $\Gal{K}\to G$.
Similiarly, counting \'etale algebras  amount to counting continuous homormorphisms. Note that if $\Gal{K}$
acts on \(G\) trivially, then $H^1(\Gal{K},G)=\Hom(\Gal{K},G)$.. 
This leads to the question of counting classes in the Galois cohomology group, $H^1(\Gal{K},G)$ 
for more general group actions and is related to counting points on \(BG\) for non-constant group schemes. In this context, there have been several attempts at formulating a ``twisted'' version of Malle's conjecture for non-constant group schemes \(G\). Some classical approaches and results can be found in \cite{alberts21} or \cite{alberts-odorney}, while a more stacky approach is taken in \cite{darda-yasuda-torsors1} or \cite{darda-yasuda-torsors2}.

\subsection{The stacky Batyrev--Manin--Malle conjecture}
\label{subsec:stacky-batman-malle}
We end our article with a discussion on Malle-type conjectures for torsors, i.e., for points on the stack \(BG\). When \(G\) is a constant group scheme, these conjectures specialize to the classical versions of Malle's conjecture described above in \S \ref{subsec:classical-malle}. When \(G\) is non-constant, the story is more complicated:
in \cite{darda-yasuda-torsors1}, the authors give such a conjecture, although in this case the height of a point does not admit a straightforward description as the discriminant of an \'etale algebra. 

In this section, we focus our attention on the constant group scheme \(\underline{\ZZ/n\ZZ}\). We briefly describe work of \cite{ESZB} and \cite{darda-yasuda22} in the context of Malle's conjecture, recalling the definitions of the invariants that they define and showing that they agree with the invariants for Malle's conjecture for \(B(\ZZ/n\ZZ)\). We remark that their conjectures apply to more general stacks, but that is beyond the scope of this article.

In what follows, we assume that the base field \(K\) contains a primitive \(n\)-th root of unity, i.e., that \(B\mu_n\) and \(B(\ZZ/n\ZZ)\) are isomorphic over \(K\). For the conjectures for general finite \(G\) see \cite{darda-yasuda-torsors1}. 

\subsubsection{The vector bundle perspective}
\label{subsec:eszb-conjecture}

For a ``nice'' stack \(\mathcal{X}\), the authors in \cite{ESZB} define a notion of being ``generically`' bounded below, that helps them establish finiteness properties.

\begin{definition}
\label{defn:gen-bounded-below}
    A function \(f: \mathcal{X}(\ol{K}) \to \RR\) is said to be  {generically bounded below} if there is some proper substack \(Z \hookrightarrow \mathcal{X}\) and a constant \(B\) such that \(\{ x \in \mathcal{X}(\ol{K}) \mid f(x) < [K(x):K] \cdot B \} \subset Z(\ol{K})\), where $K(x)$ is the field of definition of $x$.
\end{definition}

When \(\mathcal{X} = BG_{\OO_K}\) for a smooth finite constant group scheme \(G\) over a number field \(K\), \(\mathcal{X}\) has no proper closed substacks, and this notion reduces to a more common notion of being bounded below. 

\begin{lemma}
     A function $f:BG(\overline K)\to\RR$ is generically bounded below as in Definition \ref{defn:gen-bounded-below} if and only if there is a constant $B$ such that for all \(x \in BG(\overline{K})\), $f(x)\geq [K(x):K]B$.
\end{lemma}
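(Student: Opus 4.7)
The plan is to reduce the equivalence to the topological observation that every proper closed substack $Z \hookrightarrow BG$ satisfies $Z(\overline{K}) = \emptyset$, whence both directions of Definition \ref{defn:gen-bounded-below} become essentially tautological in this setting. Given this reduction, the implication ``$f(x) \geq [K(x):K] B$ for all $x$ implies generically bounded below'' is immediate: the set $\{x \in BG(\overline{K}) : f(x) < [K(x):K] B\}$ is empty, so taking $Z = \emptyset$ as the proper substack in Definition \ref{defn:gen-bounded-below} trivially works.

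For the converse, suppose $f$ is generically bounded below, witnessed by a proper substack $Z \hookrightarrow BG$ and a constant $B$. I would then argue that $Z(\overline{K}) = \emptyset$ as follows. Since $G$ is smooth and finite, $BG_K$ is a (DM) gerbe over $\spec K$ whose underlying topological space $|BG_K|$ is a single point. Closed substacks of $BG_K$ correspond bijectively to closed subsets of $|BG_K|$, so the only non-empty closed substack of $BG_K$ is $BG_K$ itself. If instead one works with $BG$ over $\spec \OO_K$, then any $\overline{K}$-point factors through the generic fiber $BG_K$ (as $\overline{K}$ is a field of characteristic zero), so only substacks containing the generic point can have $\overline{K}$-points, and such a closed substack must be all of $BG$ by irreducibility. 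Either way, a proper closed $Z$ has $Z(\overline{K}) = \emptyset$, and hence the containment from Definition \ref{defn:gen-bounded-below} forces the set $\{x \in BG(\overline{K}) : f(x) < [K(x):K] B\}$ to be empty, giving $f(x) \geq [K(x):K] B$ for all $x \in BG(\overline{K})$.

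The main (albeit mild) obstacle is pinning down the topological claim about closed substacks of $BG$ precisely enough to conclude that $Z(\overline{K}) = \emptyset$ for every proper $Z$. The correspondence between closed substacks and closed subsets of $|BG|$ is standard (see, e.g., \cite[Chapter 8]{Olsson-Book}), and the fact that $|BG_K|$ is a single point follows from $BG_K$ being a gerbe over $\spec K$; no other geometric input is needed. Once this is in place, the equivalence drops out with no further computation.
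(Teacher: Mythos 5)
Your proof is correct and reaches the same conclusion as the paper's — that every proper closed substack $Z \subsetneq BG_{\OO_K}$ satisfies $Z(\overline K) = \emptyset$ — but by a genuinely different route. The paper's proof invokes the explicit classification of closed substacks of $BG_{\OO_K}$ as quotients $[Z/G]$ for $Z = \spec \OO_K/I$ a closed subscheme of $\spec \OO_K$ (citing \cite[Exercise 3.3.9]{alpernotes}), and then derives a contradiction ring-theoretically: for $I \neq (0)$ the map $\ZZ \to \OO_K/I$ has nontrivial kernel, yet any $\overline K$-point on $[Z/G]$ would factor this through the injective map $\ZZ \to A$ into a characteristic-zero \'etale algebra $A$. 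You instead argue topologically: $\overline K$-points must land in the generic fiber, $|BG_{\OO_K}|$ is irreducible (being homeomorphic to $|\spec \OO_K|$ since $G$ acts trivially), so a proper closed substack misses the generic point and thus has no $\overline K$-points. Both work. Your argument is cleaner at a conceptual level, though it requires knowing the topology of $|BG_{\OO_K}|$; the paper's is more hands-on and self-contained. One small caution: your phrase ``closed substacks of $BG_K$ correspond bijectively to closed subsets of $|BG_K|$'' is true here only because $\spec K$ is the spectrum of a field (so the only closed subschemes are $\emptyset$ and the whole thing); in general one has to distinguish closed subsets from (possibly nonreduced) closed substacks. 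Over $\spec \OO_K$ this distinction is real (e.g.\ $\spec \OO_K/\p^2$), but since $\OO_K$ is a domain, any closed subscheme whose support is all of $\spec \OO_K$ is $\spec \OO_K$ itself, so your conclusion still holds.
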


\begin{proof}
    Every closed substack of $BG_{\OO_K}$ is of the form $[Z/G]$ for a closed subscheme $Z$ of $\spec\OO_K$ by \cite[Exercise 3.3.9]{alpernotes}. If $L/K$ is a finite field extension, an $L$-point on $[Z/G]$ is given by a $G$-torsor $Y\to\spec L$ with a morphism of schemes $Y\to Z$. Since $\underline{G}$ is affine, $Y=\spec A$ for an \'etale $L$-algebra $A$ that is also an algebra over $\OO_K/I$, where $Z=\spec\OO_K/I$. If $I\neq(0)$, then the ring homomorphism $\ZZ\to \OO_K/I$ is not injective, but the ring homomorphism $\ZZ\to A$ is injective. Thus such a scheme $Y$ doesn't exist, and $[Z/G](\overline K)=\emptyset$.
\end{proof}

Let \(K\) be a number field, \(G\) a constant group scheme over \(K\), and \(x \in BG(K)\). If \(\mathcal{C} \to \spec \OO_K\) is a tuning stack corresponding to \(x\), 
define the expected deformation dimension to be 
    \[
   \edd(x) := \sum_{\substack{v \in M_K,\\ \text{ ramified in }  \mathcal{C}} } \log q_{v},
    \]
    where \(q_{v}\) denotes the size of the residue field at \(v\). This is a particular case of a more general definition, and seeks to imitate the anti-canonical height of a point. See \cite[\S 4.1]{ESZB}. 

Given a metrized vector bundle \(\mathcal{V}\) on \(BG_{\OO_K}\) {and $a'\in\RR$}, define
\begin{equation*}
\label{eq:Daprime}
    D_{a'}(x) = a'\height_{\mathcal{V}}(x) - \edd(x).
\end{equation*}

\begin{definition}
    \label{defn:a-invariant-ESZB}
    Define
    \[
    a(BG, \mathcal{V}) :=\inf \{a' \in \RR \mid D_{a'}(x) \text{ is generically bounded below on } BG(\ol{K})\}.
    \]
    If this infimum doesn't exist, then \(a(BG, \mathcal{V}):=-\infty\).
\end{definition}

\begin{conjecture}[Weak stacky Batyrev--Manin--Malle conjecture {\cite[Conjecture 4.14]{ESZB}}]
    Let \(\mathcal{X} =  {BG_{\OO_K}}\), 
    equipped with a vector bundle \(\mathcal{V}\). Consider for any open substack \(\mathcal{U} \subset \mathcal{X}\), the counting function 
\[
\countingESZB = \# \{ x \in \mathcal{X}(K) \mid \Height_{\mathcal{V}}(x) < B \}.
\]
Then there is a choice of \(\mathcal{U}\) such that for every \(\epsilon >0\), there is a constant \(c_{\epsilon}\) satisfying
\[
c_{\epsilon}^{-1}B^{a(\mathcal{X}, \mathcal{V})} \ll \countingESZB \ll c_{\epsilon} B^{a(\mathcal{X}, \mathcal{V}) + \epsilon}.
\]
\end{conjecture}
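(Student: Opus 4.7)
The conjecture is open in its full generality, but in the special case $\mathcal{X} = BG_{\OO_K}$ with $G = \underline{\ZZ/n\ZZ}$ and $K \supseteq \QQ(\zeta_n)$ (the setting of this article), it should reduce to Wright's theorem on counting abelian extensions, which actually establishes the \emph{strong} form of Malle's conjecture. My plan has three steps: (i) compute $a(BG, \mathcal{V})$ from Definition \ref{defn:a-invariant-ESZB} by a local analysis; (ii) identify $a(BG,\mathcal{V})$ with a normalization of the classical Malle exponent; (iii) translate Wright's asymptotic into the stacky counting function $\countingESZB$.

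For step (i), the lemma preceding Definition \ref{defn:gen-bounded-below} shows that $BG_{\OO_K}$ admits no nontrivial proper closed substacks, so ``generically bounded below'' collapses to ``bounded below'' on all of $BG(\ol{K})$. By Proposition \ref{proposition:ESZB-discriminant} we have $\height_{\mathcal{V}}(x) = \tfrac{1}{2}\log|\Delta(L/K)|$, and Proposition \ref{prop:discriminant-Bmun} (applied via the isomorphism $B\mu_n \cong B(\ZZ/n\ZZ)$ over $K$ from Lemma \ref{rmk:mu_n isom Z/nZ}) gives the local contribution $\tfrac{1}{2}(n-n/d)\log q_v$ at a tamely ramified place $v$ whose inertia has order $d\mid n$, up to a bounded wild part at $v\mid n$. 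The expected deformation dimension contributes $\log q_v$ per ramified place. The pointwise ratio $\height_{\mathcal{V}}/\edd$ at a single tame place is $(n-n/d)/2$, minimized at $d=r$, the smallest prime dividing $n$. Summing over places and verifying that wild contributions are uniformly bounded (using the constant $C_{K,n}$ from Proposition \ref{prop:discriminant-Bmun}) yields $a(BG,\mathcal{V}) = 2/(n-n/r)$.

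For step (ii), comparing with Example \ref{example:malle-constants-zmodn} gives $a(BG,\mathcal{V}) = 2\,a(G)$. The factor of two is purely a normalization artifact: since $\Height_{\mathcal{V}}(x) = |\Delta(L/K)|^{1/2}$, the bound $\Height_{\mathcal{V}}(x) < B$ is the bound $|\Delta(L/K)| < B^2$. For step (iii), Wright's theorem yields
\[
T(K, \ZZ/n\ZZ; B^2) \sim c_{K,G} B^{2 a(G)} (\log B^2)^{b(K,G)-1} \asymp B^{a(BG,\mathcal{V})}(\log B)^{b(K,G)-1}.
\]
One can take $\mathcal{U} = BG$ (no thin subset needs to be removed in the abelian case), and since $(\log B)^{b(K,G)-1} \ll_{\varepsilon} B^{\varepsilon}$, both the upper and lower bounds in the weak conjecture follow immediately, with the sharper polylogarithmic asymptotic as a bonus. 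A mild wrinkle is that $T(K,\ZZ/n\ZZ;B)$ counts all étale algebras rather than fields, but the difference between these counts is of strictly lower order, as recalled after Remark \ref{rmk:counting-torsors-versus-malle}.

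The main obstacle in extending this strategy is twofold. Computing $a(BG,\mathcal{V})$ in general requires a careful orbit-by-orbit analysis of inertia subgroups and tuning stacks, matching the combinatorial minimum from Definition \ref{defn:a-invariant-ESZB} with the Malle invariant $a(G)$ attached to the permutation representation underlying $\mathcal{V}$; for wild inertia in characteristic dividing $|G|$ this matching is already delicate. More seriously, once one leaves the abelian case, the counting asymptotic in step (iii) becomes Malle's conjecture itself, which is open for most non-abelian $G$ and is even known to be \emph{false} as stated for some $G$ (e.g.\ Klüners's counterexample for $C_3 \wr C_2$); dealing with such cases requires carefully identifying an accumulating substack to excise and showing that the weak bound survives, which is precisely the role of the open substack $\mathcal{U}$ in the conjecture.
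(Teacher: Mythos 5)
This statement is a \emph{conjecture}, not a theorem, so the paper offers no proof of it: it is quoted from \cite{ESZB} and then followed by a proposition that carries out only your step (i), the computation $a(B\mu_n,\mathcal{V})=2/(n-n/r)$. Your step (i) therefore runs parallel to what the paper actually does (same local decomposition $D_{a'}(x)$, same use of Propositions \ref{proposition:ESZB-discriminant} and \ref{prop:discriminant-Bmun}, same exploitation of the lemma that $BG_{\OO_K}$ has no proper closed substacks), while steps (ii)--(iii) go beyond the paper by verifying the conjecture in the abelian case via Wright. That extension is sound in outline.

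There is one factual slip worth correcting. You claim the difference between the \'etale-algebra count $T(K,\ZZ/n\ZZ;B)$ and the field count $M(K,\ZZ/n\ZZ;B)$ is ``of strictly lower order, as recalled after Remark \ref{rmk:counting-torsors-versus-malle}.'' The paper says the opposite: the proportion of fields among \'etale $G$-algebras is $100\%$ \emph{only} when $G$ has prime exponent, which for $\ZZ/n\ZZ$ fails as soon as $n$ is not prime. In the text immediately following Darda's theorem the paper notes that the exponents of $B$ and $\log B$ in Wright and Darda agree but ``the constants in front in general, do not'' --- i.e.\ $T$ and $M$ have the same order of magnitude but different leading constants, not a lower-order discrepancy. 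This does not damage your conclusion for the \emph{weak} conjecture (same power of $B$ is all you need, and the polylog is absorbed into $B^{\varepsilon}$), but your parenthetical claim to also recover the strong asymptotic ``as a bonus'' is safe only if you are careful about which count Wright's theorem concerns and accept that the leading constant will differ between $T$ and $M$.

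One further small point: the counting function $\countingESZB$ is meant to run over $x\in\mathcal{U}(K)$, not $\mathcal{X}(K)$ (a typo inherited from the statement). Your choice $\mathcal{U}=BG$ is the right one here, since $BG$ has no proper open substacks over $\OO_K$, but in the general conjecture $\mathcal{U}$ is where accumulating loci are excised, precisely the mechanism you invoke at the end for Kl\"uners's $C_3\wr C_2$ counterexample.
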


Of course, \cite[Conjecture 4.1.]{ESZB} is stated for more general \(\mathcal{X}\), namely for proper Artin stacks. In this article, we compute the invariant $a(B\mu_n, \mathcal{V})$. For a heuristic argument for \(BG\) for more general \(G\), we refer the reader to \cite{ESZB}.

\begin{proposition}
   Let \(K\) be a number field containing the \(n\)-th roots of unity, let \(\mathcal{X} = B\mu_n\) and let \(\mathcal{V}\) be the vector bundle corresponding to the regular representation as in Proposition \ref{proposition:ESZB-discriminant}. 
   Let \(r\) be the smallest prime dividing \(n\). Then 
   \[a(B\mu_n, \mathcal{V}) = \frac{2}{n-n/r}.
   \]
   \end{proposition}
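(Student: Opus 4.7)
The plan is to reduce the computation to a sum over finite places. Since $K$ contains $\mu_n$, Lemma \ref{rmk:mu_n isom Z/nZ} gives an isomorphism $B\mu_n \cong B(\ZZ/n\ZZ)$ over $K$, under which the regular representation of $\mu_n$ corresponds to the regular representation of $\ZZ/n\ZZ$. Hence by Proposition \ref{proposition:ESZB-discriminant} (equivalently by Theorem \ref{thm:ESZB-height-Bmu-n} in the case $K(\zeta_n)=K$), for $x \in B\mu_n(K)$ corresponding to $[a]\in K^{\times}/(K^{\times})^n$ with \'etale algebra $K_a = K[t]/(t^n-a^{-1})$, one has
$$\height_{\mathcal{V}}(x) = \tfrac{1}{2}\log|\Delta(K_a/K)| + O(1),$$
where the $O(1)$ absorbs bounded local discrepancies at primes dividing $n$. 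Combining with Proposition \ref{prop:discriminant-Bmun}, and using that the tuning stack $\mathcal{C}$ of $x$ has non-trivial inertia $\mu_{n/d_v}$ at a place $v \nmid n$ exactly when $d_v := \gcd(v(a), n) < n$, the expected deformation dimension satisfies
$$\edd(x) = \sum_{\substack{v \nmid n \\ d_v < n}} \log q_v + O(1).$$
Substituting, for every $a' \in \RR$,
$$D_{a'}(x) = \sum_{\substack{v \nmid n \\ d_v < n}}\left(\tfrac{a'(n-d_v)}{2}-1\right)\log q_v + O(1).$$

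The crucial observation is that $d_v$ ranges over the proper divisors of $n$, so $d_v \le n/r$, with equality achievable. Hence the local coefficient $\tfrac{a'(n-d_v)}{2}-1$ is minimized (over all admissible $d_v$) at $d_v=n/r$ and is non-negative there precisely when $a' \ge 2/(n-n/r)$. For $a' \ge 2/(n-n/r)$ every local term is non-negative, so $D_{a'}(x) \ge O(1)$ uniformly in $x$; combined with the fact that $B\mu_{n,\OO_K}$ has no proper closed substacks with non-empty generic fibre (by the same affine argument as in the lemma following Definition \ref{defn:gen-bounded-below}, since $\mu_n$ is affine), this proves generic boundedness below, giving $a(B\mu_n,\mathcal{V}) \le 2/(n-n/r)$.

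For the reverse inequality, I would exhibit, for each fixed $a' < 2/(n-n/r)$, an infinite family of distinct $K$-points on which $D_{a'}$ tends to $-\infty$. Using weak approximation together with finiteness of the class group of $K$, for every $k$ one can find $a \in K^{\times}$ with $v_i(a) = n/r$ at $k$ distinct primes $v_1,\dots,v_k$ of $\OO_K$ coprime to $n$ and $v(a) = 0$ outside a bounded auxiliary set. The corresponding torsor satisfies $d_{v_i} = n/r$ for each $i$, so each such place contributes the fixed negative amount $\bigl(\tfrac{a'(n-n/r)}{2}-1\bigr)\log q_{v_i}$ to $D_{a'}(x)$, and summing over the $k$ primes drives $D_{a'}$ to $-\infty$. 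Varying $k$ and the prime sets produces infinitely many distinct classes in $K^{\times}/(K^{\times})^n$, as required.

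The main technical obstacle is rigorously pinning down the tuning stack for a $\mu_n$-torsor $K_a$ and confirming that its set of ramified places matches $\{v \nmid n : d_v < n\}$, so that the $\edd$ formula holds up to bounded error; this should follow from the local root-stack description of $\mathcal{C}$ implicit in the proof of Proposition \ref{proposition:ESZB-discriminant}, where the local inertia at $v \nmid n$ is $\mu_{n/d_v}$. A secondary task is verifying that the contributions at primes above $n$ — both the discriminant exponents $e(a,v) \le C_{K,n}$ from Proposition \ref{prop:discriminant-Bmun} and the $\edd$ contribution — can be absorbed uniformly into the $O(1)$ error, which is possible precisely because these primes form a fixed finite set depending only on $K$ and $n$.
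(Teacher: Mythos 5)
Your argument matches the paper's proof in all essentials: you derive the same local decomposition
\[
D_{a'}(x) = \sum_{v \nmid n,\ v \mid \Delta(K_a/K)}\left(\tfrac{a'}{2}(n-d_v)-1\right)\log q_v + O(1),
\]
observe that $d_v$ ranges over proper divisors of $n$ so $d_v \le n/r$ with equality achievable, and then exhibit families of torsors with $d_v = n/r$ at arbitrarily many places to show $D_{a'}\to -\infty$ whenever $a' < 2/(n-n/r)$. The only cosmetic difference is in the production of witnesses: the paper takes $a = b^{n/r}$ with $b \in \ZZ$ squarefree, coprime to $n$, and with all prime factors splitting completely in $K$ (via Chebotarev), which forces $v(a) \in \{0, n/r\}$ at every $v \nmid n$ and so cleanly guarantees that every nonzero local term is negative. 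Your appeal to weak approximation plus class group finiteness is in the same spirit, but as written it does not control $v(a)$ outside your chosen primes and the auxiliary set, which matters: for $2/(n-d_v) < a' < 2/(n-n/r)$ with $d_v < n/r$, an extra ramified prime contributes a \emph{positive} term and could in principle offset the negative ones. This is a fixable imprecision rather than a wrong approach --- choosing the $k$ primes within a single ideal class (Chebotarev) or taking $a$ to be an $n/r$-th power of a squarefree rational integer, as the paper does, closes the gap --- but you should make that part precise. Your worry about pinning down the tuning stack and $\edd$ is addressed in the paper by citing \cite[\S 4.4]{ESZB}; your description of the local inertia as $\mu_{n/d_v}$ is consistent with that.
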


Note that the factor of \(2\) appears here because the height in Proposition \ref{proposition:ESZB-discriminant} is the square root of the discriminant, rather than the discriminant.

\begin{proof}
Let $x\in B\mu_n(K)$ be a point corresponding to the $K$-algebra $K_a$ for some non-zero element $a\in \OO_{K}$. Combining Proposition \ref{proposition:ESZB-discriminant},  \cite[\S4.4]{ESZB} and Proposition \ref{prop:discriminant-Bmun}, we get
\begin{align*}
D_{a'}(x) &= a'\height_{\mathcal{V}}(x) - \edd(x) \\
& = \frac {a'}2 \log |\Delta(K_a/K)| - \sum_{v\mid \Delta(K_a/K)}\log q_v \\
& = \frac {a'}2 \left(\sum_{v\mid a, v\nmid n} (n-d_v)\log q_v + \sum_{v\mid n}e(a,v)\log q_v\right) - \sum_{v\mid \Delta(K_a/K)}\log q_v \\
& = \sum_{v\mid \Delta(K_a/K), v\nmid n}\left(\frac {a'}2 (n-d_v) -1 \right)\log q_v + \sum_{v\mid \Delta(K_a/K), v\mid n}(e(a,v)-1)\log q_v.
\end{align*}

Note that since \(0 \le e(a,v) \le C_{K,n}\) as in Proposition \ref{prop:discriminant-Bmun} and \(n\) is fixed, the second part of this sum is uniformly bounded by a constant for all \(x\). We will show that if $\frac {a'}2 (n-d_v) -1<0$ for some \(a' \in \RR\), some non-zero \(a \in \OO_K\) and some \(v \nmid n\) with  \(v \mid a\), 
then the first sum can be made arbitrarily negative on \(B\mu_n(K)\).

Observe that if $a=b^{d}$ with \(d\mid n\) and $b\in\ZZ$ square free, coprime to $n$, and such that every prime number that divides $b$ splits completely in $\OO_K$, then $n-d_v=n-d$ for every place of $K$ that divides $a$ but not $n$. 

If $\frac {a'}2 (n-d_v) -1<0$ for some $a'$, $a$, and $v$ as above, choose \(d\) such that \(d = d_v = \gcd(v(a), n)\). By the Chebotarev density theorem \cite{Chebotarev}, there are infinitely many prime numbers that split completely in $K$. So we can produce a sequence of integers $b$ as above where each element has more prime factors than the previous one, thus making the first part of \(D_{a'}(x)\) arbitrarily negative using \(x\) coming from \(b^d\).
Therefore, the function $D_{a'}$ is not bounded from below on $B\mu_n(K)$.
Thus $D_{a'}$ is generically bounded below on $B\mu_n(\overline K)$ if and only if 
$$
\frac {a'}2 \left(n-d_v\right) -1\geq 0
$$
for all $v\mid\Delta(K_a/K)$ such that $v \nmid n$
and for all $a\in \OO_{K}\smallsetminus\{0\}$, where $d_v=\gcd(v(a),n)$.
Since the infimum is the largest lower bound,
$$
a(B\mu_n,\mathcal V)=\max_{a\in \OO_{K,\neq0}}\max_{v\mid\Delta(K_a/K), v\nmid n} \frac 2 {n-d_v}.
$$
We observe that if $d_v\neq n$, then 
$$
n-d_v\geq n- n/r.
$$
While if $d_v=n$, then $n-d_v=0$, so that $v\nmid \Delta(K_a/K)$.
Moreover, if $a=b^{n/r}$ with $b\in\ZZ$ square free, coprime to $n$, and such that every prime number that divides $b$ splits completely in $\OO_K$, then $n-d_v=n-n/r$ for every place of $K$ that divides $a$. Thus, $a(B\mu_n,\mathcal V)=2/(n-n/r)$.
\end{proof}

\subsubsection{The raising function perspective}
\label{subsec:darda-yasuda-conjecture}
The current state-of-the-art conjecture for the \(b\)-invariant in the stacky analogue of Malle's conjecture comes from work of Darda and Yasuda in \cite{darda-yasuda22} and \cite{darda-yasuda-torsors1}. In this section, we discuss the computation of the \(a\) and \(b\) invariants from their work. 

Recall from \S \ref{subsec:darda-yasuda-heights}, that Darda and Yasuda define the height of a point on \(BG\) via raising data as in Definition \ref{defn:raising-function}, which captures inertia data of the corresponding \'etale \(G\)-algebra. The raising function \(c : \pi_0(\JetsX) \to \R_{\geq 0}\) plays an important role in their version of Malle's conjecture. More precisely, define,
\begin{align*}
     a(c) &:= \max\{c(\mathcal{Y})^{-1} \mid \mathcal{Y} \in {\pi_0}^*\left({\mathcal{J}_0\mathcal{X}}\right) \} \\
     b(c) &:= \#\{\mathcal{Y} \in {\pi_0}^*\JetsX \mid c(\mathcal{Y}) = a(c)^{-1} \}. 
\end{align*}
In \cite[Proposition 9.25]{darda-yasuda22}, the authors show that these invariants agree with
the definition of the \(a\) and \(b\) invariants for more general stacks via the effective cone defined in \cite[Definitions 9.3, 9.7]{darda-yasuda22}.

By \cite[Lemma 2.5.5]{darda-yasuda-torsors1}, if the local factors \(c_{v}\) are defined as the valuations
of the local discriminants, then the global function \(c\) is precisely the index as in Malle's conjecture. Since in the case where $\zeta_n \in K$,
the set of sectors \(\pi_0(\JetsX)\) can be identified with \(\ZZ/n\ZZ\), which in turn can be identified with the set of conjugacy classes in \(\ZZ/n\ZZ\), \(a(c)\) is precisely the minimal index \(1/(n-n/r)\).

\begin{remark}
    The proof of \cite[Proposition 9.24]{darda-yasuda22} gives an intuitive reason for why one might expect the definition of the \(a\)-invariant in \cite{ESZB} to be given via the condition of being generically bounded below. The latter condition imitates the condition of belonging to the effective cone as in Manin's conjecture. 
\end{remark}

\begin{conjecture}[{\cite[Conjecture 9.10]{darda-yasuda22}}, {\cite[Conjecture 2.6.5]{darda-yasuda-torsors1}}]
    Let \(\mathcal{X} = BG_{K}\). Then there is a thin set \(T\) as in \cite[\S 9.1]{darda-yasuda22} and a positive constant \(A\) such that
    \[
    \#\left\{x \in BG(K) \setminus T \mid \Height_c(x) < B \right\} \sim A \cdot B^{a(c)} \log(B)^{b(c) - 1}.
    \]
\end{conjecture}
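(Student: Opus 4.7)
The plan is to study the height zeta function
\[
Z_K(s) \;=\; \sum_{x \in BG(K) \setminus T} \Height_c(x)^{-s},
\]
establish its meromorphic continuation slightly past the abscissa $s = a(c)$ with a pole of order exactly $b(c)$, and then extract the asymptotic via a Tauberian theorem of Delange--Ikehara type. I would focus first on the case $G = \underline{\ZZ/n\ZZ}$ over a number field $K$ containing $\zeta_n$ (so $B(\ZZ/n\ZZ) \cong B\mu_n$), where by Theorem \ref{thm:Z/nZ} and class field theory a $K$-point corresponds to a character $\chi$ of order dividing $n$ on the idele class group $C_K$.

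Because the raising datum $c = (c_v)_v$ is defined purely locally at all but finitely many places, the height factors as $\Height_c(\chi) = \prod_v q_v^{c_v(\chi)}$, where $c_v(\chi)$ depends only on the sector $\mathcal{Y} \in \pi_0(\Jets BG)$ that $\chi$ cuts out at $v$, i.e.\ on the image of the local inertia. One therefore expects an Euler-product expression
\[
Z_K(s) \;=\; \prod_{v \in M_K} L_v(s) \cdot R(s),
\]
with $R(s)$ a correction at a finite set of bad places (where $c_v$ need not agree with the valuation of the discriminant), and each local factor of the form $L_v(s) = 1 + \sum_{\mathcal{Y} \neq 0} N_{v,\mathcal{Y}} \, q_v^{-s \cdot c(\mathcal{Y})}$, the sum being over nontrivial sectors realised as local inertia groups, with $N_{v,\mathcal{Y}}$ counting the local Galois characters that hit that sector. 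The next step is to compare this Euler product with a suitable product of Hecke $L$-functions: after dividing by $\zeta_K\!\bigl(s(n-n/r)\bigr)^{b(c)}$, I would check using the explicit computation of $N_{v,\mathcal{Y}}$ (and the description of local conductors via Proposition \ref{prop:discriminant-Bmun}) that the quotient is holomorphic and bounded on a half-plane $\mathrm{Re}(s) \geq a(c) - \delta$ for some $\delta > 0$.

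Because $a(c) = 1/(n - n/r)$, the factor $\zeta_K\!\bigl(s(n-n/r)\bigr)^{b(c)}$ contributes precisely a pole of order $b(c)$ at $s = a(c)$, whose residue unfolds into a product of local densities that defines the leading constant $A$. A standard Tauberian theorem applied to $Z_K$, in the spirit of the arguments in \cite{wright} and \cite[\S 9.2]{dardathesis}, then yields the predicted asymptotic.

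The main obstacle is the correct choice of the thin set $T$. In the purely abelian case with $\zeta_n \in K$ one can often take $T$ empty, and indeed this is essentially the content of Wright's and Darda's theorems already quoted. As soon as one drops the assumption $\zeta_n \in K$, or treats non-constant group schemes, accumulating subfamilies analogous to Klüners' $C_3 \wr C_2$ counterexample, or the intermediate $K(\zeta_m)$-subextensions discussed around Theorem \ref{thm:ESZB-height-Bmu-n}, can inflate the leading term, and these must be removed by a thin set in the sense of \cite[\S 9.1]{darda-yasuda22}. Characterising which substacks and which fibres of $\pi_0(\Jets BG) \to M_K$ need to be excised, and then verifying that the remaining counting function still carries an Euler product whose factors align with $\zeta_K(s(n-n/r))^{b(c)}$, is the delicate point where the argument genuinely depends on $G$ and where one expects the analysis to remain open in general.
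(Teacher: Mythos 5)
The statement you are attempting to prove is a \emph{conjecture} in this paper, not a theorem: the authors state it (crediting \cite{darda-yasuda22} and \cite{darda-yasuda-torsors1}) precisely because it has no known proof in general, so there is no proof in the paper to compare yours against. Your write-up is therefore not a proof but a research program, and indeed you concede as much in your final paragraph.

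That said, the program you sketch is sound and consistent with the cases that \emph{are} known, which the paper surveys in \S\ref{subsec:classical-malle}. For abelian $G$ with the regular representation, Wright's theorem \cite{wright}, and for $B\mu_n$ over any number field Darda's theorem \cite{dardathesis}, are both proved by exactly the route you describe: local decomposition of the height via raising data or quasi-discriminants, an Euler-product expression for the height zeta function, comparison with $\zeta_K(s(n-n/r))^{b}$ to locate the rightmost pole and its order, and a Delange--Ikehara Tauberian step. Your observation that in those settings one can take $T = \emptyset$ is also correct. Where your proposal stops short of a proof is exactly where the conjecture is open: you need a uniform mechanism to identify and excise the accumulating thin set $T$ for arbitrary finite $G$ (the Kl\"uners $C_3 \wr C_2$ phenomenon, the intermediate cyclotomic subextensions around Theorem~\ref{thm:ESZB-height-Bmu-n}, T\"urkelli-type corrections to the $b$-constant, and their non-constant analogues), and you need to show that after removal the remaining Euler product still has the correct pole structure. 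No such mechanism is known, and your proposal does not supply one; it identifies the obstacle correctly but does not overcome it. So the proposal should be read as a correct description of the known proof strategy in the cyclic/abelian case and a correct diagnosis of why the general case is open, not as a proof of the conjecture.
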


\bibliographystyle{alpha}
\bibliography{bibliography}
\end{document}